\begin{document}
\author{Paige Randall North}
\thanks{This material is based upon work supported by the Air Force Office of Scientific Research under award number FA9550-16-1-0212.}
\title{Identity types and weak factorization systems in Cauchy complete categories}

\begin{abstract}
It has been known that categorical interpretations of dependent type theory with $\Sigma$- and $\Id$-types induce weak factorization systems. When one has a weak factorization system $(\mathcal L, \mathcal R)$ on a category $\C$ in hand, it is then natural to ask whether or not $(\mathcal L, \mathcal R)$ harbors an interpretation of dependent type theory with $\Sigma$- and $\Id$- (and possibly $\Pi$-) types. Using the framework of \emph{display map categories} to phrase this question more precisely, one would ask whether or not there exists a class $\mathcal D$ of morphisms of $\C$ such that the retract closure of $\mathcal D$ is the class $ \mathcal R$ and the pair $(\C, \mathcal D)$ forms a display map category modeling $\Sigma$- and $\Id$- (and possibly $\Pi$-) types. In this paper, we show, with the hypothesis that $\C$ is Cauchy complete, that there exists such a class $\mathcal D$ if and only if $(\C, \mathcal R)$ \emph{itself} forms a display map category modeling $\Sigma$- and $\Id$- (and possibly $\Pi$-) types. Thus, we reduce the search space of our original question from a potentially proper class to a singleton.
\end{abstract}

\maketitle

\tableofcontents

\section{Introduction}

In this paper, we study categorical interpretations of dependent type theory \cite{NPS00}. It has long been known that 
dependent type theory with $\Sigma$- and $\Id$-types can be interpreted in certain weak factorization systems \cite{War08, AW09} and that such interpretations {induce} weak factorization systems \cite{GG08}. Thus, a search for such interpretations could comprise two steps: first, identify a weak factorization system $(\mathcal L, \mathcal R)$ on a category $\C$, and second, decide if $(\mathcal L, \mathcal R)$ harbors an interpretation of dependent type theory with $\Sigma$- and $\Id$-types. Since we are interested in the connection between dependent type theory and familiar weak factorization systems of homotopy theory, we are interested primarily in the second step.

The present paper is the first in a series of papers which develop a theorem for recognizing whether a given weak factorization system $(\mathcal L, \mathcal R)$ harbors a model of dependent type theory with $\Sigma$- and $\Id$- (and possibly $\Pi$-) types. The content of this series can already be found in the author's thesis \cite{Nor17}. This paper is a streamlined account of the second chapter of \cite{Nor17}.

In this and following papers, we choose \emph{display map categories} from the various categorical frameworks which can interpret dependent type theory. This is because the data of a display map category, which consists of a category $\C$ and a class of maps of $\C$, is directly comparable to the data underlying a weak factorization system, which consists of a category $\C$ and two classes of maps of $\C$ (each of which determine the other). 

Not only do we choose the simplest categorical framework for interpreting dependent type theory, but we have also chosen the simplest variant of \emph{weak factorization system} (compared to, for example, \emph{algebraic} weak factorization systems). We make these choices in order to reveal the most fundamental connection between these two notions.

In this paper, we study display map categories $(\C, \D)$ which model $\Sigma$- and $\Id$- (and possibly $\Pi$-) types. As mentioned above, such a structure generates a weak factorization system $(^\boxslash \D, \overline \D)$ on the category $\C$ where $\overline \D$ is the retract closure of $\D$.  In this framework, for any weak factorization system $(\mathcal L, \mathcal R)$ on a category $\C$, our original question, 
\begin{quest}
Does $(\mathcal L, \mathcal R)$ harbor an interpretation of dependent type theory with $\Sigma$- and $\Id$- (and possibly $\Pi$-) types?
\end{quest} 
\noindent can be phrased more precisely as the following question.
 \begin{quest} \label{secondquestion}
 Does there exist a subclass $\mathcal D \subseteq \mathcal R$ such that $\mathcal R$ is the retract closure of $\mathcal D$ and $(\C, \mathcal D)$ is a display map category which models $\Sigma$- and $\Id$- (and possibly $\Pi$-) types?
 \end{quest}
\noindent The original contribution of this paper is the following theorem.

\begin{mainthm}
Consider a Cauchy complete category $\C$ and a display map category $(\C,  \D)$ which models $\Sigma$-types and functorial $\Id$-types. Then $(\C, \overline \D)$ is again a display map category modeling $\Sigma$- and functorial $\Id$-types. If $(\C,  \D)$ also models $\Pi$-types, then $(\C, \overline \D)$ also models $\Pi$-types.
\end{mainthm}

With this theorem, Question \ref{secondquestion} is equivalent to the following question.

 \begin{quest}
Is $(\C, \mathcal R)$ a display map category with $\Sigma$- and $\Id$- (and possibly $\Pi$-) types?
 \end{quest}
 Thus, to decide whether or not $(\mathcal L, \mathcal R)$ harbors an interpretation, we do not have to analyze the pair $(\C, \D)$ 
for all classes $\D$ whose retract closure is $\mathcal R$, which likely constitute a proper class. Rather, we only need to analyze the one pair $(\C, \mathcal R)$. Indeed, this will be the subject of the next paper of this series.

\section{Display map categories and quantifiers}
\label{sec:dmc}

In this section, we fix definitions of \emph{display map categories} and of \emph{$\Sigma$-} and \emph{$\Pi$-types} in display map categories. 

\begin{defn}
A \emph{display map category} $(\C, \D)$ consists of a category $\C$ with a terminal object and a class $\D$ of morphisms of $\C$ such that:
\begin{enumerate}
\item $\D$ contains every isomorphism;
\item $\D$ contains every morphism whose codomain is a terminal object;
\item every pullback of every morphism of $\D$ exists; and
\item $\D$ is stable under pullback.
\end{enumerate}
We call the elements of $\D$ \emph{display maps}.
\end{defn}

In such a display map category, the objects of $\C$ are meant to represent contexts, and the morphisms of $\C$ represent context morphisms. A morphism $p: E \to B$ of $\mathcal D$ represents a type family $E$ dependent on $B$. The empty context is represented by the terminal object of $\C$, so condition (2) says that every object of $\C$ may also be viewed as a type dependent on the empty context. The pullback of a morphism $p$ of $\mathcal D$ along a morphism $f$ of $\C$ represents the substitution of $f$ into the type family $p$.

\begin{defn}\label{sigmatypes}
A {display map category $(\C, \D)$} \emph{models $\Sigma$-types} if $\mathcal D$ is closed under composition.  We call a composition $gf$ of display maps a \emph{$\Sigma$-type} and sometimes denote it by $\Sigma_g f$.
\end{defn}

\begin{defn}\label{pilem}
A display map category $(\C, \D)$
\emph{models $\Pi$-types} if for every pair of composable display maps $g:W \to X$ and $f:X \to Y$, there exists a display map $\Pi_f g$ with codomain $Y$ and the universal property 
\[ \C / Y(y, \Pi_f g) \cong \C / X(f^* y, g)\]
natural in $y$. The term \emph{$\Pi$-type} will refer to such a display map $\Pi_f g$.
\end{defn}

\begin{rem}
The definitions in this section are relatively standard in the literature.
A display map category which models $\Sigma$-types coincides with Joyal's notion of \emph{clan}, and a display map category which models $\Sigma$- and $\Pi$-types coincides with his notion of \emph{$\pi$-clan} \cite{Joy17}. 
A \emph{class $\D$ of displays} in $\C$, in the sense of Taylor \cite{Tay99}, where all identities and morphisms to the terminal object are in $\D$ is a display map category $(\C, \D)$. His strong sums and dependent products coincide with our $\Sigma$-types and $\Pi$-types. Criteria (1)-(4) of Shulman's definition of \emph{type-theoretic fibration category} constitute a display map category with $\Sigma$- and $\Pi$-types \cite{Shu15}.
\end{rem}

\section{Identity types in display map categories}
Now we define $\Id$-types in a display map category. This definition is more convoluted and less standard than the definitions of $\Sigma$- and $\Pi$-types, but in Section \ref{sec:justifyid} below, we justify this choice of definition by comparing it with others.

First we fix some notation.

\begin{notn}
For a class $\M$ of morphisms of a category $\C$, let $^\boxslash \M$ denote the class of morphisms of $\C$ which have the left lifting property against $\M$. Similarly, let $\M^\boxslash$ denote the class of morphisms of $\C$ which have the right lifting property against $\M$.
\end{notn}

\begin{defn} \label{Idtypes}
Consider a {display map category} ($\C, \D$) which models $\Sigma$-types. We say that it \emph{models $\Id$-types} if for every $f:X \to Y$ in $\mathcal D$, 
\begin{enumerate}
\item the diagonal $\Delta_f: f \to f \times f$ in the slice $\C / Y$ has a factorization $\Delta_f = \epsilon_f r_f$ in $ \C /Y$ 
\begin{equation} \diagram
X \ar[dr]_f \ar[r]^-{r_f} & \Id(f) \ar[d]^{\iota_f} \ar[r]^-{\epsilon_f} & X \times_Y X \ar[dl]^{f \times f} \\
& Y 
\enddiagram \label{eq:idfact}
\end{equation}
 \end{enumerate}
 such that
 \begin{enumerate}[resume]
 \item $\epsilon_f$ is in $ \mathcal D$ and
 \item for every morphism $\alpha: A \to X$ in $\C$, the pullback $ \alpha^* r_f $, as shown below, is in $^\boxslash\mathcal{D}$ for $i = 0,1$.
\begin{equation}
\diagram 
&\alpha^*\Id(f) \ar[rrr] \ar[dd]|!{"2,1";"2,4"}\hole &&&{\Id(f)} \ar[dd]^{\pi_i \epsilon_f} \\
A \ar[rrr] \ar@{=}[dr] \ar[ur]^{\alpha^* r_f} &&& X \ar@{=}[dr] \ar[ur]^{r_f} & \\
& A \ar[rrr]^\alpha && & X
\enddiagram 
\end{equation}
\end{enumerate}

We will call the morphism $\iota_f: \Id(f) \to Y$ in Diagram (\ref{eq:idfact}) the \emph{$\Id$-type of $f$} in $ \C /Y$. 
\end{defn}

Note that since $(\C, \D)$ models $\Sigma$-types in this definition, $\D$ is closed under composition and stable under pullback. Thus, for any $f \in \D$, $f \times f$ is in $\D$ as it is the composition of a pullback of $f$ with $f$, and $\iota_f$ is in $\D$ since it is the composition of $\epsilon_f$ and $f \times f$.

\begin{defn}
Consider a {display map category} ($\C, \D$) which models $\Sigma$-types and $\Id$-types. For any object $Y$, let $\D / Y$ denote the full subcategory of the slice category $\C / Y$ spanned by those objects which are display maps. Let $\mathcal F$ denote the category $\bullet \to \bullet \to \bullet$ so that $(\D / Y)^{\mathcal F}$ is the category of composable pairs of morphisms of $\D / Y$.

We say that $(\C, \D)$ \emph{functorially models $\Id$-types} if for each object $Y$ of $\C$, there is a functor $ \D / Y \to (\D / Y)^{\mathcal F}$ which provides the factorization required by part (1) of Definition \ref{Idtypes} above. 
\end{defn}

\subsection{Comparison with other identity types}
\label{sec:justifyid}
Our definition of identity types is not completely standard, so we pause here to compare it to others in the literature. The unconcerned reader can safely skip this section.

The identity types given above correspond to one of several ways in which identity types may be defined in the syntax of dependent type theory. All the variants that we will discuss here start with the standard formation and introduction rules.
\[
\inferrule
  { \Gamma \vdash a,b : A}
  { \Gamma \vdash \Id_A(a,b)}
   \hspace{3em}
\inferrule
{ \Gamma \vdash a:A}
{ \Gamma \vdash r_a: \Id_A(a,a)}
\]
These must respect substitution: that is, we have the following meta-theoretic rules which Warren \cite{War08} calls \emph{coherence rules}.
\[
\inferrule
  {\Gamma, x: T, \Theta \vdash a,b : A \\  \Gamma,  \Delta \vdash t : T}
  { \Gamma, \Delta, \Theta[t/x] \vdash \Id_A(a,b)[t/x] = \Id_{A[t/x]}(a[t/x],b[t/x])}
\]\[
\inferrule
  {\Gamma, x: T, \Theta \vdash a: A \\  \Gamma,  \Delta \vdash t : T}
  { \Gamma, \Delta, \Theta[t/x] \vdash r_a[t/x] = r_{a[t/x]} : \Id_A(a,b)[t/x] }
\]
Then the elimination and computation rules may be given in one of several ways:
\begin{enumerate}
\item The (non-parametrized) elimination and computation rules \`a la Martin-L\"of
\[
\inferrule
  { \Gamma,a,b:A, p: \Id_A(a,b) \vdash E(a,b,p) \\
   \Gamma,a :A \vdash e(a) : E(a,a,r_a)
   }
  {\Gamma,a,b:A, p: \Id_A(a,b) \vdash j(e,a,b,p):E(a,b,p) \\ \Gamma,a :A \vdash e(a) = j(e,a,a,r_a) : E(a,a,r_a) }
\]
together with the appropriate coherence rule.
\end{enumerate}

In the absence of $\Pi$-types, these rules are not strong enough to prove many important properties of the identity type. The following two variants of the elimination and computation rules build some of the flexibility that $\Pi$-types provide directly into the identity types:
\begin{enumerate}[resume]
\item The parametrized elimination and computation rules \`a la Martin-L\"of 
\[
\inferrule
  {\Gamma,a,b:A, p: \Id_A(a,b), \Theta(a,b,p) \vdash E(a,b,p) \\
   \Gamma,a :A , \Theta(a,a,r_a) \vdash e(a) : E(a,a,r_a)
   }
  {\Gamma,a,b:A, p: \Id_A(a,b) , \Theta(a,b,p)\vdash j(e,a,b,p):E(a,b,p) \\ \Gamma,a :A, \Theta(a,a,r_a) \vdash e(a) = j(e,a,a,r_a) : E(a,a,r_a) }
\]
together with the appropriate coherence rule;

\item The elimination and computation rules \`a la Paulin-Mohring
\[
\inferrule
  { \Gamma \vdash a: A \\ \Gamma, b:A, p: \Id_A(a,b) \vdash E(b,p) \\
   \Gamma \vdash e : E(a,r_a) 
   }
  {\Gamma,b:A, p: \Id_A(a,b) \vdash j(e,b,p):E(b,p) \\ \Gamma \vdash e = j(e,a,r_a) : E(a,r_a) }
\]
\[
\inferrule
  { \Gamma \vdash a: A \\ \Gamma, b:A, p: \Id_A(b,a) \vdash E(b,p) \\
   \Gamma \vdash e : E(a,r_a) 
   }
  {\Gamma,b:A, p: \Id_A(b,a) \vdash j(e,b,p):E(b,p) \\ \Gamma \vdash e = j(e,a,r_a) : E(a,r_a) }
\]
together with the appropriate coherence rule.
\end{enumerate}
\begin{rem}
One might also consider parametrized elimination and computation rules \`a la Paulin-Mohring, by combining the variants (2) and (3).
\[
\inferrule
  { \Gamma \vdash a: A  \\ \Gamma, b:A, p: \Id_A(a,b),\Theta(b,p) \vdash E(b,p) \\
   \Gamma,\Theta(a,r_a) \vdash e : E(a,r_a) 
   }
  {\Gamma,b:A, p: \Id_A(a,b) , \Theta(b,p) \vdash j(e,b,p):E(b,p) \\ \Gamma , \Theta(a,r_a) \vdash e = j(e,a,r_a) : E(a,r_a) }\]
 However, we do not find it necessary to consider such strong $\Id$-types.
\end{rem}

In the presence of $\Pi$-types, these three variants (1)-(3) of the rules are all equivalent.  The fact that (1) is equivalent to (3) was first shown by Martin Hofmann, and can be found in \cite[Addendum, pp.\ 142-143]{Str93}. We show below in Proposition \ref{equividtypes} that the two strengthened variants, (2) and (3), are equivalent in the absence of $\Pi$-types at least in our categorical interpretation with the hypothesis that $^\boxslash \D$ is stable under pullback along $\D$.

Now, we define interpretations of these three variants of the identity type in a display map category. Note that we only model the coherence rules \emph{weakly}, in the sense of \cite{LW15}. 

\begin{defn}\label{lotsofidtypes}
Consider a category with display maps $(\C, \D)$ which models $\Sigma$-types. It \emph{models the formation and introduction rule of $\Id$-types} if for every $d:A \to \Gamma$ in $\mathcal D$, the diagonal $\Delta_d: d \to d \times d$ has a factorization $\Delta_d = \epsilon_d r_d$ in the slice $ \C /\Gamma$ such that $\epsilon_d$ is in $ \mathcal D$.
\[ \diagram
A \ar[dr]_d \ar[r]^-{r_d} & \Id(d) \ar[d]^{\iota_d} \ar[r]^-{\epsilon_d} & A \times_\Gamma A \ar[dl]^{d \times d} \\
& \Gamma 
\enddiagram \] 
 
If $(\C, \D)$ {models the formation and introduction rules of $\Id$-types} and if for every display map $d: A \to \Gamma$ and every morphism $\sigma: \Delta \to \Gamma$, the pullback of $\sigma^*r_d$ (illustrated in the following diagram) is in $^\boxslash \D$, we say that $(\C, \D)$ \emph{models Martin-L\"of $\Id$-types}.
\begin{equation*}
\diagram 
&\sigma^*\Id(d) \ar[rrr] \ar[dd]|!{"2,1";"2,4"}\hole &&&{\Id(d)} \ar[dd]^{\iota_d} \\
\sigma^* A \ar[rrr] \ar[dr] \ar[ur]^{\sigma^* r_d} &&& A \ar[dr]^{d} \ar[ur]^{r_d} & \\
& \Delta \ar[rrr]^\sigma && & \Gamma
\enddiagram
\end{equation*}

If $(\C, \D)$ {models Martin-L\"of $\Id$-types} and if for every display map $d: A \to \Gamma$, every morphism $\alpha: \Delta \to \Gamma$, and every display map $\theta: \Theta \to \sigma^* \Id(d)$, we have that $\theta^* (\sigma^* r_d)$ is in $^\boxslash \D$, then we say that $(\C, \D)$ \emph{models parametrized Martin-L\"of $\Id$-types}.

Now suppose that $(\C, \D)$ models the formation and introduction rule of $\Id$-types. Suppose also that for all display maps $d: A \to \Gamma$, objects $\Delta$ of $\C$, morphisms $\sigma: \Delta \to A$, and $i \in \{0,1\}$, the pullback $\sigma^* r_d$ of $r_d$ shown below is in $^\boxslash \D$. Then we say that $(\C, \D)$ \emph{models Paulin-Mohring $\Id$-types}.
\begin{equation}\label{pmdiag}
\diagram 
&\sigma^*\Id(d) \ar[rrr] \ar[dd]|!{"2,1";"2,4"}\hole &&&{\Id(d)} \ar[dd]^{\pi_i \epsilon_d} \\
\Delta \ar[rrr] \ar@{=}[dr] \ar[ur]^{\sigma^* r_d} &&& A \ar@{=}[dr] \ar[ur]^{r_d} & \\
& \Delta \ar[rrr]^\sigma && & A
\enddiagram 
\end{equation}
\end{defn}

The parametrized Martin-L\"of $\Id$-types correspond to the \emph{strong $\Id$-types} of \cite{BG12}. The Paulin-Mohring $\Id$-types are what we just call identity types in Definition \ref{Idtypes} and in the rest of this paper.

\begin{prop}\label{equividtypes}
Consider a display map category $(\C, \D)$ which models $\Sigma$-types and the formation and introduction rules of $\Id$-types. Then 
\begin{enumerate}
\item if $(\C, \D)$ {models parametrized Martin-L\"of $\Id$-types}, it models Paulin-Mohring $\Id$-types, and 
\item if $(\C, \D)$ models Paulin-Mohring $\Id$-types and $^\boxslash \D$ is stable under pullback along $\D$, then it models parametrized Martin-L\"of $\Id$-types.
\end{enumerate}
\end{prop}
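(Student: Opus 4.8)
The plan is to unwind both definitions to a statement about lifting properties of pullbacks of the map $r_d$, and to observe that the two notions differ only in which pullbacks of $r_d$ one is allowed to take. For part (1), suppose $(\C,\D)$ models parametrized Martin-L\"of $\Id$-types. Given a display map $d:A\to\Gamma$, an object $\Delta$, a morphism $\sigma:\Delta\to A$, and $i\in\{0,1\}$, I must show the pullback $\sigma^*r_d$ along $\pi_i\epsilon_d:\Id(d)\to A$ (the map labelled $\sigma^*r_d$ in Diagram~\eqref{pmdiag}) lies in $^\boxslash\D$. The idea is to realise this pullback as a pullback of the form allowed in the definition of parametrized Martin-L\"of $\Id$-types. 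First I note that $\pi_i\epsilon_d r_d = \mathrm{id}_A$ (since $\epsilon_d r_d = \Delta_d$ and $\pi_i\Delta_d = \mathrm{id}$), so $r_d$ is a section of $\pi_i\epsilon_d$; hence pulling $r_d$ back along $\pi_i\epsilon_d$ and then restricting to a section reproduces $A$ over $\Delta$ appropriately. More directly: the parametrized Martin-L\"of rule, instantiated at the display map $d$, the morphism $\sigma' := d\sigma : \Delta \to \Gamma$ (or simply $\mathrm{id}_\Gamma$), and an appropriate display map $\theta:\Theta\to \sigma'^*\Id(d)$, produces exactly $\theta^*(\sigma'^*r_d)\in{}^\boxslash\D$; choosing $\theta$ to carve out the correct fibre recovers the Paulin-Mohring pullback. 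The main task is bookkeeping: identifying which $\sigma'$ and $\theta$ make the two pullback squares coincide, using that $\Id(f)$ lives over $\Gamma$ via $\iota_d$ and that $X\times_\Gamma X$ projects to $A$ via $\pi_i$.

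For part (2), assume $(\C,\D)$ models Paulin-Mohring $\Id$-types and that $^\boxslash\D$ is stable under pullback along $\D$. I must show that for every display map $d:A\to\Gamma$, every $\alpha:\Delta\to\Gamma$ (the paper writes $\sigma$), and every display map $\theta:\Theta\to\sigma^*\Id(d)$, the map $\theta^*(\sigma^*r_d)$ lies in $^\boxslash\D$. The strategy is a two-step pullback. Start from a Paulin-Mohring pullback of $r_d$ along $\pi_i\epsilon_d$, which is in $^\boxslash\D$ by hypothesis; then pull this back further. The key point is that the composite pullback $\sigma^*r_d$ followed by $\theta^*(\text{--})$ is itself a pullback of some Paulin-Mohring pullback of $r_d$ along a display map, because $\theta$ is a display map and the relevant composites of $\epsilon_d$ (which is in $\D$) with projections stay in $\D$ by the $\Sigma$-type closure. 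Concretely: $\sigma^*\Id(d)\to\Delta$ is (a pullback of) the display map $\iota_d$, so $\theta:\Theta\to\sigma^*\Id(d)$ composed with the structure maps lands us in a situation where $\theta^*(\sigma^*r_d)$ is a pullback, along the display map $\theta$ composed with a pullback of $\epsilon_d$, of a Paulin-Mohring pullback $\tau^*r_d$ for a suitable $\tau:\Theta'\to A$. Invoking stability of $^\boxslash\D$ under pullback along $\D$ then finishes the argument.

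The main obstacle I anticipate is purely diagrammatic: keeping straight the tower of pullback squares and verifying that the outer rectangles genuinely are pullbacks of $r_d$ along maps that are in $^\boxslash\D$ (for direction (1)) or in $\D$ (for direction (2)). In particular, in (2) one must check that the morphism along which the extra pullback is taken is really in $\D$ — this is where the hypothesis that $(\C,\D)$ models $\Sigma$-types is used, since it guarantees $\epsilon_d\in\D$, $\iota_d\in\D$, and that these are closed under the pullbacks and composites we form; and one must check that the Paulin-Mohring pullback being transported is taken along the correct projection $\pi_i\epsilon_d$. Once the squares are correctly assembled, both implications reduce to: (1) a Paulin-Mohring pullback is an instance of (or a pullback of) a parametrized Martin-L\"of pullback, and (2) a parametrized Martin-L\"of pullback is a pullback-along-a-display-map of a Paulin-Mohring pullback, so stability of $^\boxslash\D$ along $\D$ closes the loop. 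I would organise the write-up by drawing the single three-dimensional diagram that exhibits all the relevant squares at once and then reading off the two statements from it.
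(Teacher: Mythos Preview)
Your argument for part~(2) is essentially the paper's: once you observe that the Martin-L\"of pullback $\sigma^* r_d$ (for $\sigma:\Delta\to\Gamma$) coincides with the Paulin--Mohring pullback along the canonical map $\sigma^*A\to A$ (a pullback-pasting check), the hypothesis gives $\sigma^* r_d\in{}^\boxslash\D$, and then stability of ${}^\boxslash\D$ under pullback along $\theta\in\D$ finishes. The paper records exactly this in one line.

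Part~(1), however, has a genuine gap. Your plan is to exhibit the Paulin--Mohring pullback $\sigma^* r_d$ (for $\sigma:\Delta\to A$) directly as an instance $\theta^*\bigl((\sigma')^* r_d\bigr)$ of the parametrized Martin-L\"of rule. Unwinding that rule, $\theta^*\bigl((\sigma')^* r_d\bigr)$ is the pullback of $r_d$ along a map $\Theta\to\Id(d)$ that factors as a \emph{display map} $\theta$ followed by a base-change of some $\sigma':\Delta'\to\Gamma$. To recover the Paulin--Mohring pullback you need $\Theta=\sigma^*\Id(d)$ and the map $\sigma^*\Id(d)\to\Id(d)$, which is a pullback of the \emph{arbitrary} morphism $\sigma:\Delta\to A$; there is no reason this factors through any $(\sigma')^*\Id(d)$ by a display map. (Taking $\sigma'=\mathrm{id}_\Gamma$ forces $\theta$ itself to be this pullback of $\sigma$, which is not in $\D$ in general.) Your fallback, that the Paulin--Mohring pullback is at least a \emph{pullback of} a parametrized Martin-L\"of pullback, does not help either: in direction~(1) you have no stability hypothesis for ${}^\boxslash\D$ under pullback.

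The paper does not attempt such an instance-matching; it cites Lemma~2.4 of \cite{Shu15}, whose proof is a genuine construction. One shows that a given lifting problem for $\sigma^* r_d$ against a display map can be solved by categorical path induction: pull the display map back to $\sigma^*\Id(d)$, observe that the given square provides a section over the image of $r_d$, and extend this section along the (parametrized) Martin-L\"of eliminator. This is where the parametrized rule replaces Shulman's standing hypothesis that ${}^\boxslash\D$ is stable under pullback along $\D$. Your diagrammatic bookkeeping will not manufacture this step; you need to actually run the $J$-rule argument.
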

\begin{proof}
Suppose that $(\C, \D)$ {models parametrized Martin-L\"of $\Id$-types}. We need to verify that the map $\sigma^*r_d$ in Diagram (\ref{pmdiag}) is in $^\boxslash \D$. This follows from Lemma 2.4 of \cite{Shu15}. That $(\C, \D)$ {models Martin-L\"of $\Id$-types} corresponds to Shulman's conditions ($5'$) and ($6'$) which he shows entail condition (6). Then, the fact that $\sigma^*r_d$ is in $^\boxslash \D$ is an instance of (6). Note that Shulman's stated hypotheses are stronger than ours, though ours suffice to prove this result. In particular, he assumes that $^\boxslash \D$ is stable under pullback along $\D$, but our weaker hypothesis that the $\Id$-types are parametrized can be used instead.

Now suppose that $(\C, \D)$ models Paulin-Mohring $\Id$-types and $^\boxslash \D$ is stable under pullback along $\D$. Consider a morphism $\theta^* (\sigma^* r_d)$ as in the definition (\ref{lotsofidtypes}) of parametrized Martin-L\"of $\Id$-types. Since $\theta \in \D$ and $\sigma^* r_d \in {^\boxslash \D}$, we have that $\theta^* (\sigma^* r_d) \in {^\boxslash \D}$.
\end{proof}

Thus, modulo the hypothesis that $^\boxslash \D$ is stable under pullback along $\D$, the conditions that a display map category $(\C, \D)$ model parametrized Martin-L\"of $\Id$-types and that it model Paulin-Mohring $\Id$-types are equivalent. In the next paper of this series, we will show that if a display map category $(\C, \D)$ models $\Sigma$-types and the formation and introduction rules of $\Id$-types, then it models Paulin-Mohring $\Id$-types if and only if $^\boxslash \D$ is stable under pullback along $\D$ (which already appears as Theorem 3.5.2 of \cite{Nor17}). Thus this hypothesis that $^\boxslash \D$ is stable under pullback along $\D$ is not necessary, but this is not the focus of the present paper.

The conditions (1), (2), (5), (6) of Shulman's type-theoretic fibration categories \cite[Def.\ 2.1]{Shu15} constitute a display map category $(\C, \D)$ which models $\Sigma$-types and Paulin-Mohring $\Id$-types.
A \emph{tribe} in the sense of Joyal \cite{Joy17} is a display map category $(\C, \D)$ which models $\Sigma$-types and Paulin-Mohring $\Id$-types, given this equivalence between Paulin-Mohring $\Id$-types and the stability of $^\boxslash \D$ under pullback along $\D$.

In summary, the $\Id$-types that we consider, the Paulin-Mohring $\Id$-types, are comparable to other categorical $\Id$-types that have appeared in the literature, and they are an appropriate version to study in the absence of $\Pi$-types. In what follows, we will return to calling Paulin-Mohring $\Id$-types just $\Id$-types.

\section{Weak factorization systems from display map categories}
\label{sec:wfs}

In this section, we recall how any display map category $(\C,\D)$ with $\Sigma$-types and $\Id$-types generates a weak factorization system $({ ^\boxslash \D},  \overline \D )$ with a factorization $(\lambda, \rho)$ where $\overline \D$ is the retract closure of $\D$ and the image of $\rho$ lies in $\D$. This will give us a good enough handle on the relationship between $\D$ and $\overline \D$ to prove our main theorem in the following section, where we extend an interpretation of type theory in $(\C, \D)$ to one in $(\C, \overline \D)$.

In the following proposition, we construct this weak factorization system. The proof uses ideas from the proof of Theorem 10 of \cite{GG08}, where a weak factorization system is constructed in the syntactic category of a dependent type theory. A categorical version appears as Theorem 2.8 in \cite{Emm14}.

\begin{notn}\label{closured}
	Let $\overline \D$ denote $(^\boxslash \mathcal D)^{\boxslash}$.
\end{notn}

\begin{prop}[{\cite[Thm.~2.8]{Emm14}}] \label{idwfs}
Consider a display map category $(\C, \D)$ which models $\Sigma$-types and $\Id$-types. There exists a weak factorization system $({ ^\boxslash \mathcal D},  \overline \D)$ in $\C$ with a factorization $(\lambda, \rho)$ where the image of $\rho$ is contained in $\mathcal D$. Furthermore, if the $\Id$-types are functorial, then this weak factorization system is functorial.
\end{prop}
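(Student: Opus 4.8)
The plan is to build the factorization $(\lambda,\rho)$ of an arbitrary morphism $h \colon A \to B$ out of the $\Id$-type data, following the mapping-path-space construction of Gambino–Garner. First I would reduce to factoring $h$ through an object built from $\Id(\pi_B)$, where $\pi_B \colon A \times B \to B$ is the projection: since $(\C,\D)$ models $\Sigma$-types, $\pi_B$ is a display map (it is a pullback of $A \to 1$, which is in $\D$ by axiom (2), and $\D$ is pullback-stable), so Definition~\ref{Idtypes} supplies a factorization $\Delta_{\pi_B} = \epsilon_{\pi_B} r_{\pi_B}$ in $\C/B$ with $\epsilon_{\pi_B} \in \D$. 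Pulling the path object $\Id(\pi_B) \to B$ back along $\langle h, \mathrm{id}_B\rangle$-type data (more precisely, forming the appropriate pullback that picks out paths starting at $h(a)$) yields an object $P_h$ through which $h$ factors as $A \xrightarrow{\lambda} P_h \xrightarrow{\rho} B$, where $\lambda$ is obtained from $r_{\pi_B}$ by pullback and $\rho$ is obtained from one of the projections composed with $\epsilon_{\pi_B}$, hence lies in $\D$ because $\D$ is closed under composition and pullback. This immediately gives the claim that the image of $\rho$ is contained in $\D$.

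Next I would verify the two lifting properties required of a weak factorization system on $(^\boxslash\D, \overline\D)$. By Notation~\ref{closured}, $\overline\D = (^\boxslash\D)^\boxslash$, so $^\boxslash\D \boxslash \overline\D$ holds by definition, and $^\boxslash(\overline\D) = {}^\boxslash\D$ and $(^\boxslash\D)^\boxslash = \overline\D$ are formal consequences of the Galois connection between $^\boxslash(-)$ and $(-)^\boxslash$; what genuinely needs proof is that every $h$ factors as a map in $^\boxslash\D$ followed by a map in $\overline\D$. That $\rho \in \overline\D$ is clear since $\rho$'s image is in $\D \subseteq \overline\D$. The substantive point is $\lambda \in {}^\boxslash\D$: given any $f \in \D$ and a commuting square with $\lambda$ on the left and $f$ on the right, I would produce a diagonal filler. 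This is exactly where condition (3) of Definition~\ref{Idtypes} enters — $\lambda$ is a pullback of $r_{\pi_B}$ along the relevant map $\alpha$ into $A \times B$, and that hypothesis says precisely that such pullbacks of $r$ lie in $^\boxslash\D$. So the filler exists by the lifting property of $\alpha^* r_{\pi_B}$ against $f$, after checking that the given square transposes correctly into a square against $\alpha^* r_{\pi_B}$. Finally, $\overline\D$ (being a right class $(-)^\boxslash$) and $^\boxslash\D$ (being a left class) are automatically closed under retracts, so $(^\boxslash\D, \overline\D)$ is a genuine weak factorization system; and $\overline\D$ is the retract closure of $\D$ because $\D \subseteq \overline\D$, $\overline\D$ is retract-closed, and conversely any $d \in \overline\D$ factors as $\lambda\rho'$ above and a standard retract argument (using that $d \in \overline\D$ lifts against its own $\lambda$-part, which is in $^\boxslash\D$) exhibits $d$ as a retract of the display map $\rho'$.

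For functoriality: if the $\Id$-types are functorial, the factorization $\Delta_{\pi_B} = \epsilon_{\pi_B} r_{\pi_B}$ is given by a functor $\D/B \to (\D/B)^{\mathcal F}$, and I would check that the whole construction of $(\lambda_h, \rho_h)$ is natural in $h$ — i.e. assembles into a functor on the arrow category $\C^{\mathbf 2}$ — by observing that each step (taking the product with $B$, applying the functorial $\Id$-factorization, pulling back, projecting) is functorial, and pullbacks can be chosen functorially once one fixes the cleavage coming from axioms (3)–(4).

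The main obstacle I expect is the verification that $\lambda \in {}^\boxslash\D$ — specifically, correctly identifying $\lambda$ as a pullback of $r_{\pi_B}$ of the shape appearing in condition (3) of Definition~\ref{Idtypes} (with the right choice of $\alpha \colon A \to A \times B$ and the right projection index $i$), and transposing an arbitrary lifting problem against $f \in \D$ into one that the hypothesis directly resolves; the bookkeeping of which square pulls back to which, and ensuring the filler one obtains actually solves the original square, is the only place where care is genuinely needed, the rest being formal manipulation of the Galois connection $({}^\boxslash(-), (-)^\boxslash)$ and of pullback-stability.
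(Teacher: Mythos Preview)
Your overall plan mirrors the paper's closely: build the factorization as a mapping-path-space from the $\Id$-type data, get $\rho \in \D$ from pullback-stability and composition-closure, get $\lambda \in {^\boxslash \D}$ directly from condition~(3) of Definition~\ref{Idtypes}, and deduce the remaining weak-factorization-system axioms formally from the Galois connection; functoriality is then inherited from the functorial $\Id$-types. All of that is correct in outline.

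The genuine gap is your choice of display map on which to form the $\Id$-type. You take $\Id(\pi_B)$ for the projection $\pi_B \colon A \times B \to B$, but the fibres of $\pi_B$ are copies of $A$: the diagonal $\Delta_{\pi_B}$ lands in $(A \times B)\times_B(A \times B) \cong A \times A \times B$, so $\Id(\pi_B)$ encodes paths in $A$ parametrised by $B$. That is the wrong path object for factoring $h \colon A \to B$ --- your own parenthetical asks for ``paths starting at $h(a)$'', i.e.\ paths in the \emph{codomain} $B$, and $\Id(\pi_B)$ does not supply these. No pullback of $\Id(\pi_B)$ along data involving $h$ will produce the mapping path space, so the construction as written does not go through. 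The paper instead forms the $\Id$-type of the display map $Y \to *$ (codomain over the terminal object): one has $Y \xrightarrow{r_Y} \Id(Y) \xrightarrow{\epsilon_Y} Y \times Y$, the middle object of the factorization of $f\colon X \to Y$ is the pullback $X \times_Y \Id(Y)$ along $f$ and $\pi_0 \epsilon_Y$, the left factor $\lambda(f) = f^* r_Y$ is in $^\boxslash \D$ by condition~(3) with $\alpha = f$, and the right factor $\rho(f) = \pi_1 \epsilon_Y$ lies in $\D$ as the composite of a pullback of $\epsilon_Y$ with a pullback of $X \to *$. This is both simpler than routing through $A \times B$ and makes condition~(3) apply on the nose. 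If you want a relative formulation, the projection you need is $\pi_A \colon A \times B \to A$ (whose fibres are $B$), not $\pi_B$.
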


\begin{proof}
We just describe here the factorization since it will be used later. The full proof of this statement can be found in \cite{Emm14}.

The factorization is defined in the following way for any $f: X \to Y$ in $\C$. We have a factorization 
\[ \diagram
Y \ar[r]^-{r_Y} & \Id(Y)\ar[r]^-{\epsilon_Y} & Y \times Y \enddiagram \]
of the diagonal $\Delta: Y \to Y \times Y$. Now we define the factorization of $f$ to be
\[ X \xrightarrow{1 \times r_Y f} X \times_Y \Id(Y) \xrightarrow{ \pi_1 \epsilon_Y} Y \]
where the middle object is obtained in the following pullback.
\[  \diagram X \times_Y \Id(Y) \ar[r] \ar[d] \pullback & \Id(Y) \ar[d]^{\pi_0 \epsilon_Y} \\
X \ar[r]^f & Y \enddiagram \]

The left factor 
\[\lambda(f) := 1 \times  r_Y f: X \to X \times_Y \Id(Y)\]
 is obtained as the following pullback of $r_Y$.
\[ \diagram 
&f^*\Id(Y) \ar[rrr] \ar[dd]|!{"2,1";"2,4"}\hole &&&{\Id(Y)} \ar[dd]^{\pi_0 \epsilon_Y} \\
X \ar[rrr] \ar@{=}[dr] \ar[ur]^{f^* r_Y} &&& Y \ar@{=}[dr] \ar[ur]^{r_Y} & \\
& X \ar[rrr]^f && & Y
\enddiagram \]
Thus, it is in $^\boxslash \mathcal D$.

The right factor
\[\rho(f):= \pi_1 \epsilon_Y : X \times_Y \Id(Y) \to Y\]
 is in $\mathcal D$ because it is the composition of a pullback of $\epsilon_Y$ with a pullback of $X \to *$.
\end{proof}

The class $\overline \D$, which was defined to be $(^\boxslash \D)^\boxslash$ in Notation \ref{closured}, is the retract closure of $\mathcal D$, justifying its notation.

\begin{lem}[{\cite[Prop.~14.1.8]{MP12}}] \label{basicliftingproperties}
Consider a display map category $(\C, \D)$ which models $\Sigma$-types and $\Id$-types. The class $\overline \D$ contains all isomorphisms, is closed under composition and retracts, and is stable under pullback.
\end{lem}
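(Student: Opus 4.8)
The plan is to use the description of $\overline\D$ from Notation \ref{closured}: it is the class $\mathcal M^\boxslash$ cut out by the right lifting property against $\mathcal M := {^\boxslash\D}$ — equivalently, it is the right class of the weak factorization system produced in Proposition \ref{idwfs}. All four assertions are the standard closure properties enjoyed by any class defined by a right lifting property, and I would verify each one by a direct diagram chase rather than by invoking \cite{MP12}. First, every isomorphism $e$ has the right lifting property against every morphism (lift along $e^{-1}$), so every isomorphism lies in $\mathcal M^\boxslash = \overline\D$. Next, for composition, suppose $f, g \in \overline\D$ with $gf$ defined and take a lifting problem consisting of some $\ell \in {^\boxslash\D}$ on the left, $gf$ on the right, top map $u$, and bottom map $v$. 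Then $(fu, v)$ is a lifting problem of $\ell$ against $g$, so it has a solution $h$; in turn $(u, h)$ is a lifting problem of $\ell$ against $f$, so it has a solution $k$, and $k$ satisfies $k\ell = u$ and $(gf)k = gh = v$. Hence $gf \in \overline\D$.

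For retracts, suppose $g$ is a retract of some $h \in \overline\D$, witnessed by morphisms of the arrow category of $\C$; given a lifting problem of $\ell \in {^\boxslash\D}$ against $g$, precompose its top and bottom maps with the section half of the retract datum to obtain a lifting problem against $h$, solve it, and postcompose the solution with the retraction half; the usual retract calculation shows the result solves the original problem, so $g \in \overline\D$. Finally, for pullback stability, let $g$ be a pullback of $h \in \overline\D$ along a morphism $t$, with the pullback square having horizontal edges $s: \mathrm{dom}(g) \to \mathrm{dom}(h)$ and $t$. Given a lifting problem of $\ell \in {^\boxslash\D}$ against $g$ with top map $u$ and bottom map $v$, the pair $(su, tv)$ is a lifting problem of $\ell$ against $h$; choose a solution $w$. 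Then $w$ and $v$ form a cone over the cospan defining the pullback, hence induce a unique map $k$ into $\mathrm{dom}(g)$, and the two triangle identities required of $k$ (namely $k\ell = u$ and $gk = v$) follow by the uniqueness clause of the pullback's universal property from the corresponding identities for $w$. Thus $g \in \overline\D$.

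I expect no substantial obstacle here; the only subtlety worth flagging is that in a display map category not every pullback exists, so ``stable under pullback'' should be read as: every pullback of a morphism of $\overline\D$ that \emph{exists} in $\C$ again lies in $\overline\D$ — which is precisely what the chase above proves, since it makes no existence assumptions beyond the given square. (When $\C$ is Cauchy complete, as in the Main Theorem, one checks separately that the relevant pullbacks of retracts of display maps do exist, which is what makes $(\C, \overline\D)$ an honest display map category.)
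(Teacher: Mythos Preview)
Your proof is correct. The paper does not actually prove this lemma: it simply cites \cite[Prop.~14.1.8]{MP12} and moves on, since these are the standard closure properties of any class of the form $\mathcal M^\boxslash$. Your direct diagram chases are exactly the textbook arguments one finds in that reference (or in Hovey, Riehl, etc.), so there is no substantive difference in approach---you have just unpacked the citation. Your remark that ``stable under pullback'' must be read conditionally on existence is apt and matches how the paper uses the lemma: existence of the relevant pullbacks is handled separately in Proposition~\ref{cauchydisplay} under the Cauchy-completeness hypothesis.
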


\begin{prop}\label{retractclosure}
Consider a display map category $(\C, \D)$ which models $\Sigma$-types and $\Id$-types. The class $\overline \D$ is the retract-closure (in $\C^\two$, the category of morphisms of $\C$) of $\D$. Moreover, every morphism $f: X \to Y$ of $\overline \D$ is a retract in $\C / Y$ of the display map $\rho(f)$ defined in Proposition \ref{idwfs}.
\end{prop}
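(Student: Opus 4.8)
The plan is to exploit the factorization $(\lambda,\rho)$ of the weak factorization system $(^\boxslash\D,\overline\D)$ supplied by Proposition \ref{idwfs}. One inclusion is essentially free: every display map has the right lifting property against $^\boxslash\D$, so $\D\subseteq\overline\D$; and $\overline\D$ is closed under retracts by Lemma \ref{basicliftingproperties}; since the retract closure of $\D$ is the smallest retract-closed class containing $\D$, it is therefore contained in $\overline\D$. What remains --- and where the content lies --- is the reverse inclusion, in its sharp slicewise form: every $f\in\overline\D$ is a retract in $\C/Y$ of $\rho(f)$.

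So I would fix $f\colon X\to Y$ in $\overline\D = (^\boxslash\D)^\boxslash$ and recall from Proposition \ref{idwfs} that $f = \rho(f)\,\lambda(f)$, with $\lambda(f)\colon X\to X\times_Y\Id(Y)$ in $^\boxslash\D$ and $\rho(f)\colon X\times_Y\Id(Y)\to Y$ in $\D$. The key move is to solve the lifting problem
\[ \diagram X \ar@{=}[d] \ar[r]^-{\lambda(f)} & X \times_Y \Id(Y) \ar[d]^{\rho(f)} \\ X \ar[r]_-{f} & Y \enddiagram \]
which commutes since $\rho(f)\,\lambda(f) = f$. Because $\lambda(f)\in{}^\boxslash\D$ while $f\in\overline\D = (^\boxslash\D)^\boxslash$, there is a diagonal filler $s\colon X\times_Y\Id(Y)\to X$ with $s\,\lambda(f) = 1_X$ and $f\,s = \rho(f)$.

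Finally I would read off the retract. The pairs $(\lambda(f),1_Y)$ and $(s,1_Y)$ constitute morphisms $f\to\rho(f)$ and $\rho(f)\to f$ in $\C^\two$ --- the two squares commute by $\rho(f)\,\lambda(f) = f$ and $f\,s = \rho(f)$ respectively --- and their composite is $(s\,\lambda(f),1_Y) = (1_X,1_Y)$, the identity on $f$. Since the codomain components of both morphisms are literally identities on $Y$, this retract diagram lives in the slice $\C/Y$, which proves that $f$ is a retract in $\C/Y$ of the display map $\rho(f)$, and hence that $f$ belongs to the retract closure of $\D$. The only point requiring a little care is checking that the two squares commute on the nose and that the bottom edges are genuine identities rather than mere isomorphisms; both are immediate from the explicit formulas for $\lambda$ and $\rho$ recorded in Proposition \ref{idwfs}. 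I do not anticipate a real obstacle here: this is the familiar \emph{retract argument} for weak factorization systems, with the extra observation that it can be performed entirely over $Y$.
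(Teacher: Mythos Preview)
Your argument is correct and is exactly the paper's approach (the standard retract argument, attributed there to \cite[Lem.~1.1.9]{Hov99}): lift $\lambda(f)$ against $f$ to obtain $s$, then read off the retract of $\rho(f)$ in $\C/Y$.

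One small slip to fix before this compiles into something sensible: the lifting square you drew is transposed. As written, the left vertical is the identity and the right vertical is $\rho(f)$, so the diagonal filler would be a map $X \to X\times_Y \Id(Y)$, not the $s\colon X\times_Y\Id(Y)\to X$ you claim. Your surrounding text (``$\lambda(f)\in{}^\boxslash\D$ while $f\in\overline\D$'', and the equations $s\,\lambda(f)=1_X$, $f\,s=\rho(f)$) describes the correct square
\[ \diagram
X \ar@{=}[r] \ar[d]_{\lambda(f)} & X \ar[d]^{f} \\
X \times_Y \Id(Y) \ar[r]_-{\rho(f)} \ar@{-->}[ur]^{s} & Y
\enddiagram \]
with $\lambda(f)$ on the left and $f$ on the right; redraw accordingly.
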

\begin{proof}
That $\overline \D$ is the retract closure of $\D$ follows from Lemma \ref{basicliftingproperties}.

Consider any morphism $f: X \to Y$ of $\overline \D$. That $f$ is a retract of $\rho(f)$ follows from Lemma 1.1.9 of \cite{Hov99}, the argument of which we recount here. Consider the following lifting problem.
\[ \diagram 
X \ar@{=}[r] \ar[d]_{\lambda(f)}& X \ar[d]^f\\
X \times_Y \Id(Y) \ar[r]_-{\rho(f)} \ar@{-->}[ur] & Y
\enddiagram \]
It has a solution $s: X \times_Y \Id(Y)  \to X$ since $\lambda(f) \in {^\boxslash \D}$ and $f \in (^\boxslash \D)^\boxslash$.
Then we can rearrange the lifting problem diagram into the following commutative diagram where $f$ appears as a retract of $\rho(f)$ in $\C / Y$.
\[ \diagram 
X \ar@{=}@/^2ex/@<0.5ex>[rr] \ar[dr]_f \ar[r]_>>>{\lambda(f)} & X \times_Y \Id(Y) \ar[r]_-s \ar[d]^{\rho(f)} & X \ar[dl]^f \\
& Y
\enddiagram \qedhere \] 
\end{proof}

\section{Cauchy complete categories}
\label{sec:ccc}
In this section, we prove our main theorem: if a category $\C$ is Cauchy complete and $(\C, \D)$ is a display map category which models $\Sigma$-types and $\Id$-types, then $(\C, \overline \D)$ is also a display map category which models $\Sigma$-types and $\Id$-types. Moreover, if $(\C, \D)$ also models $\Pi$-types, then $(\C, \overline \D)$ models $\Pi$-types as well.

The proofs in this section use the following idea. We need to prove that a certain functor, built out of elements of $\overline \D$, is representable while we hypothesize that the same functor, if built only out of elements of $ \D$, is representable. In a Cauchy complete category, retracts of representable functors are themselves representable (Lemma \ref{retractrepresentable}). Thus, using the fact that every element of $\overline \D$ is a retract of an element of $ \D$, we aim to show that those functors we want to be representable are retracts of functors we know to be representable. 
 
 \subsection{Preliminaries}
 
 In this section, we recall the basic definitions and results that are necessary for our narrative. 
 
\begin{defn}[{\cite[Def.~6.5.1,3,8]{Bor94}}]
A morphism $e: C \to C$ in a category $\C$ is an \emph{idempotent} if $e^2 = e$. A \emph{splitting} of such an idempotent $e$ is a retract of $C$
\[ \diagram
R \ar[r]^i & C \ar[r]^r & R
\enddiagram \]	
such that $ir = e$, and we say an idempotent \emph{splits} if it has a splitting.
The category $\C$ is \emph{Cauchy complete} if every idempotent splits.
\end{defn}

Every splitting of an idempotent $e:C \to C$ arises as a coequalizer (and also an equalizer). We will make extensive use of the following corollaries of this fact so we record them here.

\begin{prop}[{\cite[Prop.~6.5.4]{Bor94}}]
Consider an idempotent $e: C \to C$ in a category $\C$. If $e$ splits as $ir = e$, then $r$ is the coequalizer of the diagram $e,1_C: C \rightrightarrows C$. Conversely, any coequalizer of the diagram $e,1_C: C \rightrightarrows C$ gives a splitting of $e$.
\end{prop}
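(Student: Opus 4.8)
The plan is to derive both implications directly from the universal property of coequalizers, using nothing beyond idempotency $e^2 = e$ and, when a splitting $R \xrightarrow{i} C \xrightarrow{r} R$ is given, the defining equations $ir = e$ and $ri = 1_R$.

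First I would treat the forward direction. Assuming $ir = e$ and $ri = 1_R$, I would check that $r$ coequalizes the pair $e, 1_C$, since $re = r(ir) = (ri)r = 1_R r = r = r 1_C$. To verify the universal property, given any $g: C \to D$ with $ge = g$, I would propose the factorization $h := gi$, which works because $hr = g(ir) = ge = g$; for uniqueness, any $h'$ with $h'r = g$ satisfies $h' = h'(ri) = (h'r)i = gi = h$. Hence $r$ is the coequalizer of $e, 1_C$.

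For the converse, suppose $r: C \to R$ is a coequalizer of $e, 1_C$, so in particular $re = r$. I would observe that $e$ itself coequalizes the pair --- this is exactly the equation $ee = e 1_C$ --- so the universal property of $r$ furnishes a unique $i: R \to C$ with $ir = e$. It then remains to see that $ri = 1_R$; for this I would compute $(ri)r = r(ir) = re = r = 1_R r$ and cancel $r$ on the right, which is legitimate because every coequalizer is an epimorphism. Thus $(R, i, r)$ is a splitting of $e$.

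There is essentially no obstacle here; the statement is elementary. The only step that deserves a moment's attention is in the converse direction: the equation $ri = 1_R$ does not fall out of the factorization property alone but must be extracted by cancelling the coequalizer $r$ from $(ri)r = r$, which is exactly where the epimorphism property of coequalizers is needed.
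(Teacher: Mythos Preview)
Your proof is correct and entirely standard. The paper itself does not supply a proof of this proposition; it simply cites \cite[Prop.~6.5.4]{Bor94}, so there is no in-paper argument to compare against, but your argument is precisely the expected one and would be at home in any textbook treatment.
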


\begin{cor}\label{splitsquare}
Consider a category $\C$, idempotents $e: C \to C$ and $f: D \to D$ in $\C$, and a morphism $c: C \to D$ making the following diagram commute.
\[ \diagram 
C \ar[rr]^e \ar[d]^c & & C \ar[d]^c \\
D \ar[rr]^f & & D
\enddiagram \]
Then splittings of both $e: C \to C$ and $f: D \to D$ extend uniquely to a splitting of the idempotent $\langle e, f \rangle$ in $\C^\two$. In particular, given a splitting $R \xrightarrow{i} C \xrightarrow{r} R$ of $e$ and a splitting $S \xrightarrow{j} D \xrightarrow{s} S$ of $f$, the following diagram displays the unique splitting of $\langle e, f \rangle$ in $\C^\two$.
\[ \diagram
R  \ar[r]^i \ar@{-->}[d]_{sci} & C \ar[r]^r \ar[d]_c  & R \ar@{-->}[d]^{sci} \\
S \ar[r]^j & D \ar[r]^s & S
\enddiagram \]
Moreover, if $c$ is an isomorphism, then so is $sci$.
\end{cor}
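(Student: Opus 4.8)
The statement to prove is Corollary \ref{splitsquare}: given a commuting square of idempotents $e,f$ connected by $c$, any choice of splittings of $e$ and $f$ extends uniquely to a splitting of the idempotent $\langle e,f\rangle$ in $\C^\two$, the explicit formula for the middle map being $sci$, and this map is an isomorphism when $c$ is. The strategy is a direct verification using the uniqueness clause of the universal property of splittings (equivalently, of the coequalizer of $e,1_C$), invoking the preceding Proposition \cite[Prop.~6.5.4]{Bor94} rather than reconstructing splittings from scratch.

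First I would record that $\langle e,f\rangle: c\to c$ is indeed an idempotent in $\C^\two$: this is immediate from $e^2=e$ and $f^2=f$ componentwise, and the given square is exactly what makes $\langle e,f\rangle$ a morphism of $\C^\two$. Next, to produce the candidate splitting, I take the splitting $R\xrightarrow{i}C\xrightarrow{r}R$ of $e$ and $S\xrightarrow{j}D\xrightarrow{s}S$ of $f$ and define the middle vertical $d:=sci:R\to S$. I must check that the two squares in the displayed diagram commute, i.e. $dr = sc$ (left square: $sci\cdot r = s\cdot c$) and $j d = c i$ (right square: $j\cdot sci = c\cdot i$). For the left square, $sci r = sc(ir) = sc\,e = (sf)c$ — using the hypothesis square $cE=fc$ — wait, more carefully: $s c (ir) = s c e$, and since $ce = fc$ we get $s(cE)\!=\!s f c$; hmm, I should instead use $sf = s$ (because $s j = 1_S$ forces $s f = s(js) = (sj)s = s$), giving $sci r = s c e = s f c = s c$. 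Symmetrically, $j\,sci = j s\, c i = f\, c i = c\, e i = c i$, using $js = f$ and $ci\,?$ — here one uses $e i = i$ (from $ir=e$, $ri=1_R$, so $ei = iri = i$) and $ce = fc$. So both squares commute, exhibiting $(i,j)$ and $(r,s)$ as a retract pair in $\C^\two$; and $(r,s)\circ(i,j) = (ri, sj) = (1_R,1_S) = 1_c$ in $\C^\two$, and $(i,j)\circ(r,s) = (ir,js) = (e,f) = \langle e,f\rangle$, so this is a genuine splitting.

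For uniqueness, I would argue componentwise: any splitting of $\langle e,f\rangle$ in $\C^\two$ whose $R$- and $S$-components are the chosen splittings of $e$ and $f$ has a middle map $d':R\to S$ satisfying $d' r = s c$ (compatibility of the retraction maps) — wait, the cleanest formulation: the retraction component $R\to S$ is determined because $r$ is the coequalizer of $e,1_C$ (by \cite[Prop.~6.5.4]{Bor94}) and $s c: C\to S$ coequalizes $e,1_C$ (since $sce = sfc = sc$), so there is a \emph{unique} factorization $d':R\to S$ with $d' r = sc$; but $sci$ also satisfies $(sci)r = sc$ as computed above, hence $d' = sci$. The final sentence is then a two-line check: if $c$ is an isomorphism, then $sci$ is invertible with inverse $rc^{-1}j$, since $rc^{-1}j\cdot sci = r c^{-1}(js)c i = r c^{-1} f c i = r c^{-1}(c e)i\cdot$ — using $fc = ce$ — $= r e i = r i = 1_R$, and symmetrically $sci\cdot rc^{-1}j = s c (ir) c^{-1} j = sc\,e\,c^{-1}j = s f c c^{-1} j = s f j = s j = 1_S$ (using $fj = j$, which follows from $js = f$, $sj = 1_S$).

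**Main obstacle.** There is no deep obstacle here; the only thing to be careful about is bookkeeping the many small identities ($ei=i$, $ri=1_R$, $sf=s$, $fj=j$, $ce=fc$, etc.) and applying them on the correct side, since the computations are short but easy to get backwards. The one genuinely structural point is to phrase uniqueness through the coequalizer characterization of splittings (Proposition \cite[Prop.~6.5.4]{Bor94}) rather than trying to argue it ad hoc, so that "extends uniquely" is justified cleanly rather than by hand-waving.
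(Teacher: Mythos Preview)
Your proof is correct and is exactly the direct verification one expects: the paper states this as an unproved corollary of the coequalizer characterization of splittings \cite[Prop.~6.5.4]{Bor94}, and your argument---checking that $(i,j)$ and $(r,s)$ are morphisms of $\C^\two$, that they compose to $\langle e,f\rangle$ and the identity, invoking the coequalizer property of $r$ for uniqueness, and exhibiting $rc^{-1}j$ as the inverse of $sci$ when $c$ is invertible---is precisely that intended unpacking. The exposition is a bit exploratory (and you swap the labels ``left'' and ``right'' for the two squares), but all the identities $ei=i$, $sf=s$, $fj=j$, $ce=fc$ are deployed correctly.
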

\begin{cor}\label{extendsplittigs}
If $\C$ is Cauchy complete, then $\C^\two$ is Cauchy complete.
\end{cor}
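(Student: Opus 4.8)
The plan is to unwind the definition of an idempotent in $\C^\two$ and then feed it directly into Corollary \ref{splitsquare}. Recall that an object of $\C^\two$ is a morphism $c: C_0 \to C_1$ of $\C$, and an endomorphism of $c$ in $\C^\two$ is a pair $\langle e_0, e_1 \rangle$ with $e_0: C_0 \to C_0$, $e_1: C_1 \to C_1$ and $c e_0 = e_1 c$. Since composition in $\C^\two$ is computed componentwise, $\langle e_0, e_1 \rangle^2 = \langle e_0^2, e_1^2 \rangle$, so $\langle e_0, e_1 \rangle$ is idempotent in $\C^\two$ if and only if $e_0$ and $e_1$ are each idempotent in $\C$. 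This is the only thing that needs to be checked by hand, and it is immediate.

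Now I would fix an arbitrary idempotent $\langle e_0, e_1 \rangle$ on $c$. Using Cauchy completeness of $\C$, choose a splitting $R_0 \xrightarrow{i_0} C_0 \xrightarrow{r_0} R_0$ of $e_0$ and a splitting $R_1 \xrightarrow{i_1} C_1 \xrightarrow{r_1} R_1$ of $e_1$. The commuting square $c e_0 = e_1 c$ is precisely the hypothesis of Corollary \ref{splitsquare} (with $e = e_0$, $f = e_1$, and the given $c$), so that corollary produces a splitting of $\langle e_0, e_1 \rangle$ in $\C^\two$, with connecting morphism $r_1 c i_0 : R_0 \to R_1$. Since $\langle e_0, e_1 \rangle$ was an arbitrary idempotent of $\C^\two$, this shows every idempotent of $\C^\two$ splits, i.e.\ $\C^\two$ is Cauchy complete. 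There is essentially no obstacle here: all the real work was already done in Corollary \ref{splitsquare}, and the present statement is just the observation that the construction there applies to every idempotent square.
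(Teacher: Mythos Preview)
Your proof is correct and is exactly the argument the paper intends: the corollary is stated without proof precisely because it follows immediately from Corollary \ref{splitsquare} by splitting each component of an idempotent in $\C^\two$ using Cauchy completeness of $\C$. There is nothing to add.
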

\begin{cor}\label{ccslice}
If $\C$ is Cauchy complete, then any slice $\C/Y$ of $\C$ is Cauchy complete.
\end{cor}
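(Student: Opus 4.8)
The plan is to reduce the problem to the Cauchy completeness of $\C$ itself, packaged through Corollary \ref{splitsquare}. Recall that an object of $\C/Y$ is a morphism $a\colon A\to Y$, that a morphism $(A,a)\to(A',a')$ of $\C/Y$ is a morphism $A\to A'$ of $\C$ commuting with the structure maps, and that an idempotent on $(A,a)$ in $\C/Y$ is an idempotent $e\colon A\to A$ of $\C$ additionally satisfying $ae=a$. The equation $ae=a$ says exactly that the square with both vertical edges $a$, top edge $e$, and bottom edge $1_Y$ commutes in $\C$; equivalently, an idempotent on $(A,a)$ in $\C/Y$ is the same data as the idempotent $\langle e,1_Y\rangle$ on the object $a$ of $\C^\two$, and a morphism of $\C/Y$ is exactly a morphism of $\C^\two$ whose codomain component is an identity.

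So, given such an idempotent $e$ on $(A,a)$, I would first split $e$ in $\C$ using the hypothesis: there are $i\colon R\to A$ and $r\colon A\to R$ with $ri=1_R$ and $ir=e$. The idempotent $1_Y$ on $Y$ splits trivially as $1_Y=1_Y\cdot 1_Y$. Feeding these two splittings and the commuting square above into Corollary \ref{splitsquare} yields the unique splitting of $\langle e,1_Y\rangle$ in $\C^\two$: its outer object is $ai\colon R\to Y$, its middle object is $a\colon A\to Y$, and its structure maps are $i$ and $r$ with codomain component $1_Y$ — that is, they are morphisms $(R,ai)\to(A,a)$ and $(A,a)\to(R,ai)$ of $\C/Y$. (If one prefers to avoid invoking Corollary \ref{splitsquare}, the same thing is a short direct check: set $b:=ai$; then $i$ is a morphism $(R,b)\to(A,a)$ of $\C/Y$ because $ai=b$, and $r$ is a morphism $(A,a)\to(R,b)$ of $\C/Y$ because $br=air=ae=a$, while $ri=1_R$ and $ir=e$ already hold in $\C$.)

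Hence the chosen idempotent splits within $\C/Y$, and since it was arbitrary, $\C/Y$ is Cauchy complete. I expect no genuine obstacle in this argument: the only point requiring care is the bookkeeping that identifies idempotents and morphisms of $\C/Y$ with those of $\C^\two$ having identity codomain component, after which the statement is immediate from the Cauchy completeness of $\C$ together with Corollary \ref{splitsquare}.
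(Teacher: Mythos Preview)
Your proposal is correct and matches the paper's intended argument: the paper states this result as an immediate corollary (with no explicit proof) of Corollary~\ref{splitsquare}, and you have spelled out exactly that deduction by identifying an idempotent in $\C/Y$ with an idempotent $\langle e, 1_Y\rangle$ in $\C^\two$ and applying Corollary~\ref{splitsquare} to the splittings of $e$ and $1_Y$ in $\C$.
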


\begin{cor}\label{splittingsunique}
Splittings of idempotents are unique up to unique isomorphism.
\end{cor}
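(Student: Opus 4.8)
The plan is to take two splittings of a given idempotent $e : C \to C$ and produce the comparison isomorphism by hand, then check there is no other. So suppose $e^2 = e$ and that we have splittings $R \xrightarrow{i} C \xrightarrow{r} R$ and $R' \xrightarrow{i'} C \xrightarrow{r'} R'$, meaning $ri = 1_R$, $ir = e$, $r'i' = 1_{R'}$, and $i'r' = e$. I will call an isomorphism $\phi : R \to R'$ \emph{compatible} with the two splittings if $i'\phi = i$ and $\phi r = r'$; the content of the corollary is that exactly one such $\phi$ exists.

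For existence, I would set $\phi := r'i : R \to R'$ with candidate inverse $\psi := ri' : R' \to R$, and verify the relations by a short manipulation of the four defining equations: $\psi\phi = r i' r' i = r e i = r (ir) i = (ri)(ri) = 1_R$, and symmetrically $\phi\psi = r' i r i' = r' e i' = r'(i'r')i' = (r'i')(r'i') = 1_{R'}$, so $\phi$ is an isomorphism. Compatibility is the same kind of computation: $i'\phi = i'r'i = ei = (ir)i = i$, and $\phi r = r'ir = r'e = r'(i'r') = (r'i')r' = r'$. For uniqueness, if $\phi' : R \to R'$ also satisfies $i'\phi' = i$ and $\phi'r = r'$, then $\phi' = \phi'(ri) = (\phi'r)i = r'i = \phi$, using $ri = 1_R$.

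Alternatively, and more in the spirit of the surrounding results, one can simply invoke Corollary \ref{splitsquare} applied to $C = D$, $e = f$, and $c = 1_C$: the two given splittings then extend to a splitting of the idempotent $\langle e, e\rangle$ on the object $1_C$ of $\C^\two$, and that corollary both supplies the connecting morphism and, since $c = 1_C$ is an isomorphism, asserts that this connecting morphism is an isomorphism and is the unique one extending the given data. Equivalently, each of $r$ and $r'$ is a coequalizer of $e, 1_C : C \rightrightarrows C$, and coequalizers are unique up to unique isomorphism. There is no genuine obstacle here; the only thing that needs care is bookkeeping with which composites equal $1$ and which equal $e$, and pinning down precisely what "compatible with the splittings" means so that the uniqueness assertion is not vacuous.
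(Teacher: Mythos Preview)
Your proof is correct. The paper gives no explicit argument here: the corollary is meant to follow immediately from the preceding proposition that any splitting $ir=e$ exhibits $r$ as a coequalizer of $e,1_C:C\rightrightarrows C$, together with the standard fact that colimits are unique up to unique isomorphism. You mention this route yourself at the end.

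Your primary approach---constructing $\phi=r'i$ and $\psi=ri'$ by hand and verifying the identities---is a more elementary, self-contained alternative that avoids invoking any universal property. It has the small advantage of making explicit what ``compatible isomorphism'' means (your conditions $i'\phi=i$ and $\phi r=r'$), which the coequalizer argument leaves implicit in the phrase ``unique isomorphism of coequalizers''. Your second alternative via Corollary~\ref{splitsquare} also works and is a nice observation, though note that logically in the paper Corollary~\ref{splitsquare} and Corollary~\ref{splittingsunique} are parallel consequences of the same coequalizer proposition rather than one depending on the other.
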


The following lemma will be our main tool in establishing the results of this section.

\begin{lem}[{\cite[Lem.~6.5.6]{Bor94}}] \label{retractrepresentable}
If $\C$ is Cauchy complete, then any retract of any representable functor $\C^\op \to \Set$ is representable.
\end{lem}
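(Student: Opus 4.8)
The plan is to move the retract down from the functor category $[\C^{\op}, \Set]$ into $\C$ itself via the Yoneda embedding, split the resulting idempotent using Cauchy completeness, and then transport the splitting back up.

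In detail: suppose $F \colon \C^{\op} \to \Set$ is a retract of a representable functor $\C(-,C)$, witnessed by natural transformations $\iota \colon F \to \C(-,C)$ and $\pi \colon \C(-,C) \to F$ with $\pi\iota = 1_F$. First I would note that $e := \iota\pi \colon \C(-,C) \to \C(-,C)$ is an idempotent natural transformation and that $(F, \iota, \pi)$ is, by construction, a splitting of $e$ in $[\C^{\op}, \Set]$. Since the Yoneda embedding is full and faithful, $e = \C(-,\bar e)$ for a unique $\bar e \colon C \to C$, and $\bar e$ is idempotent because composition of natural transformations of the form $\C(-,h)$ corresponds to composition in $\C$ and this correspondence is injective. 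Using that $\C$ is Cauchy complete, I would split $\bar e$ as $R \xrightarrow{i} C \xrightarrow{r} R$ with $ri = 1_R$ and $ir = \bar e$.

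Next I would observe that applying the Yoneda embedding to this splitting yields $\C(-,R) \xrightarrow{\C(-,i)} \C(-,C) \xrightarrow{\C(-,r)} \C(-,R)$ with $\C(-,r)\,\C(-,i) = 1_{\C(-,R)}$ and $\C(-,i)\,\C(-,r) = \C(-,\bar e) = e$, so that $(\C(-,R), \C(-,i), \C(-,r))$ is also a splitting of $e$ in $[\C^{\op}, \Set]$. By Corollary \ref{splittingsunique}, applied in the category $[\C^{\op}, \Set]$, splittings of a given idempotent are unique up to isomorphism; hence $F \cong \C(-,R)$, and so $F$ is representable.

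The argument is mostly bookkeeping, and I do not anticipate a serious obstacle. The one place that needs care is the passage between idempotents and their splittings in $[\C^{\op}, \Set]$ and in $\C$: one uses full faithfulness of Yoneda to descend the idempotent to $\C$, and then the elementary fact that any functor preserves split-idempotent data to lift the splitting back to $[\C^{\op}, \Set]$, after which uniqueness of splittings finishes the proof.
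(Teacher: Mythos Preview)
Your argument is correct and is the standard proof: descend the idempotent along the fully faithful Yoneda embedding, split it in $\C$ using Cauchy completeness, and invoke uniqueness of splittings to identify $F$ with $\C(-,R)$. The paper itself supplies no proof of this lemma, merely citing \cite[Lem.~6.5.6]{Bor94}, and the argument you give is essentially the one found there.
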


\subsection{Display map categories}

\begin{prop}\label{cauchydisplay}
Consider a Cauchy complete category $\C$.
If $(\C, \D)$ is a display map category, then $(\C, \overline \D)$ is one as well.
\end{prop}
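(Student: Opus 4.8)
The plan is to verify the four axioms in the definition of a display map category for the pair $(\C, \overline \D)$. Two of them are immediate. Since every display map has the right lifting property against $^\boxslash \D$ by definition, $\D \subseteq \overline \D$; as $\D$ already contains all isomorphisms and all morphisms with terminal codomain, so does $\overline \D$, which gives axioms (1) and (2). Axiom (4) --- closure of $\overline \D$ under those pullbacks that happen to exist --- is likewise formal and uses no completeness hypothesis: if $f : X \to Y$ lies in $\overline \D = (^\boxslash \D)^\boxslash$ and a pullback $g^* f$ of $f$ along some $g$ exists, then any lifting problem of a morphism of $^\boxslash \D$ against $g^* f$ transposes across the pullback square into one against $f$, and a solution of the latter transposes back, so $g^* f \in \overline \D$. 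Thus the entire content of the proposition is axiom (3): given $f : X \to Y$ in $\overline \D$ and an arbitrary $g : Z \to Y$, I must construct the pullback $Z \times_Y X$.

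This is where Cauchy completeness enters, exactly along the lines sketched at the opening of the section. By Proposition \ref{retractclosure}, $f$ is a retract in the slice $\C / Y$ of the display map $\rho(f) : P \to Y$, with $P := X \times_Y \Id(Y)$; write $\lambda(f) : X \to P$ and $s : P \to X$ for the retract inclusion and retraction, so that $s \lambda(f) = 1_X$, $\rho(f) \lambda(f) = f$, $f s = \rho(f)$, and hence the idempotent $e := \lambda(f) s : P \to P$ satisfies $\rho(f) e = \rho(f)$. Since $\rho(f) \in \D$, the pullback $Z \times_Y P$ of $\rho(f)$ along $g$ exists; because $\rho(f) e = \rho(f)$, the universal property of this pullback yields a necessarily idempotent endomorphism $\tilde e$ of $Z \times_Y P$ with $\pi_Z \tilde e = \pi_Z$ and $\pi_P \tilde e = e \pi_P$, i.e.\ an idempotent on the object $\pi_Z$ of $\C / Z$. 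By Corollary \ref{ccslice}, the slice $\C / Z$ is Cauchy complete, so $\tilde e$ splits there; let $Q$ be the splitting object. I claim that $Q$, together with its induced map to $Z$ and the composite $Q \to Z \times_Y P \xrightarrow{\pi_P} P \xrightarrow{s} X$, is a pullback of $f$ along $g$. Granting this, axiom (4) (already established) shows that the map $Q \to Z$ lies in $\overline \D$, and the proposition follows.

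The one genuine step is the verification of that last claim --- that the split idempotent actually represents the pullback of $f$, rather than merely being a retract of the pullback of $\rho(f)$. This is a diagram chase using $s \lambda(f) = 1_X$, the two triangle identities $\rho(f) \lambda(f) = f$ and $f s = \rho(f)$ coming from the retract in $\C / Y$, and the splitting relations for $\tilde e$; the point is that $\tilde e$ was built so that its fixed points are precisely the cones over the cospan $X \xrightarrow{f} Y \xleftarrow{g} Z$, and Cauchy completeness turns those fixed points into an honest representing object. Corollary \ref{splitsquare} can be used to organize the bookkeeping: the commuting square formed by $\tilde e$ on $Z \times_Y P$ and the identity on $Z$ has a unique splitting in $\C^\two$ extending chosen splittings of its two corners, and this is the morphism $Q \to Z$ one wants.
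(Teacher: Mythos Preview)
Your proof is correct and rests on the same two ingredients as the paper's: Proposition~\ref{retractclosure}, which exhibits any $f \in \overline\D$ as a retract of $\rho(f) \in \D$ in $\C/Y$, and Cauchy completeness. The difference is in packaging. The paper observes that the cospan $A \to Y \leftarrow X$ is a retract, in the functor category $[P,\C]$, of the cospan $A \to Y \leftarrow X'$ (with $X' \to Y$ the display map $\rho(f)$), so the cone functor of the first is a retract of the representable cone functor of the second; Lemma~\ref{retractrepresentable} then finishes in one line. You instead unwind that lemma in situ: you pull the idempotent $\lambda(f)s$ back to an idempotent $\tilde e$ on the existing pullback $Z \times_Y P$, split it via Corollary~\ref{ccslice}, and then verify by hand that maps into the splitting object are exactly the $\tilde e$-fixed cones, which you correctly identify with cones over $Z \to Y \leftarrow X$. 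Your route is more explicit and self-contained; the paper's is shorter and reusable --- the same representability lemma is the workhorse in the later $\Id$- and $\Pi$-type arguments, so its isolation pays off there. Both arguments tacitly use the $\Sigma$- and $\Id$-type hypotheses through Proposition~\ref{retractclosure}, even though the statement of Proposition~\ref{cauchydisplay} does not list them.
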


\begin{proof}
Since $\D \subseteq \overline \D$ and $\D$ contains all isomorphisms and morphisms to the terminal object, then $\overline \D$ does as well. Since $\overline \D$ is the right class of a lifting pair, it is stable under pullback (Lemma \ref{basicliftingproperties}). It only remains to show that pullbacks of morphisms of $\overline \D$ exist.

Consider a morphism $d: X \to Y$ of $\overline \D$ and a morphism $\alpha: A \to Y$ of $\C$. By Proposition \ref{retractclosure}, $d$ is a retract in $\C / Y$ of some $d': X' \to Y$ in $\D$. Let $P$ denote the pullback diagram category, and let $D, D': P \to \C$ denote the following two pullback diagrams in $\C$.
\[ \diagram 
& X \ar[d]^{d} & & &  X' \ar[d]^{d'}\\
A \ar[r]^\alpha & Y & & A \ar[r]^\alpha  & Y
\enddiagram \]

Let $c$ denote the functor $\C \to [P, \C]$ which sends an object $m$ of $\C$ to the constant functor $c_m : P \to \C$ at $m$.

Then since $d$ is a retract of $d'$ in $\C / Y$, the functor $D$ is a retract of $D'$ in $[P, \C]$, and thus the functor $\Nat(c(-),D): \C \to \Set$ is a retract of $\Nat(c(-),D'): \C \to \Set$. Now since we assume that there is a limit of the pullback diagram $D'$, the functor $\Nat(c(-),D')$ is representable. Therefore, by Lemma \ref{retractrepresentable}, the functor $\Nat(c(-),D)$ is also representable, and we conclude that $D$ has a limit.

Therefore, assuming that pullbacks of morphisms of $\D$ exist, pullbacks of morphisms of $\overline \D$ exist.
\end{proof}

\subsection{$\Sigma$-types}

Since $\overline \D$ is closed under composition, we immediately find that $(\C, \overline \D)$ models $\Sigma$-types. Note that for this result, we only use the hypothesis that $\C$ is Cauchy complete to ensure, by Proposition \ref{cauchydisplay}, that $(\C, \overline \D)$ is a display map category.

\begin{prop}\label{sigmacauchy}
Consider a Cauchy complete category $\C$ and a display map category $(\C, \D)$ which models $\Sigma$- and $\Id$-types. Then $(\C, \overline \D)$ is a display map category which models $\Sigma$-types.
\end{prop}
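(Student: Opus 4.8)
The plan is to assemble this statement entirely from results already established in the excerpt, since by Definition \ref{sigmatypes} a display map category models $\Sigma$-types exactly when its class of display maps is closed under composition. So there are only two things to verify: that $(\C, \overline \D)$ is a display map category in the first place, and that $\overline \D$ is closed under composition.

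For the first point I would invoke Proposition \ref{cauchydisplay} verbatim: since $\C$ is Cauchy complete and $(\C, \D)$ is a display map category, $(\C, \overline \D)$ is again a display map category. This is the only place the Cauchy completeness hypothesis enters; the hypothesis that $(\C, \D)$ models $\Id$-types is used only to the extent that it puts us in the setting of Section \ref{sec:wfs}, where $\overline \D = (^\boxslash \D)^\boxslash$ is the class under consideration. For the second point, closure of $\overline \D$ under composition is part of the content of Lemma \ref{basicliftingproperties}, which records that the right class of a lifting pair — here $\overline \D = (^\boxslash \D)^\boxslash$ — contains all isomorphisms, is closed under composition and retracts, and is stable under pullback.

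Putting these together, $(\C, \overline \D)$ is a display map category whose display maps are closed under composition, hence it models $\Sigma$-types by Definition \ref{sigmatypes}, and the proof is complete. I do not anticipate any obstacle here: this proposition is a bookkeeping corollary of Proposition \ref{cauchydisplay} and Lemma \ref{basicliftingproperties}. It is worth stating separately only because it serves as the (easy) base case of the main theorem, whose genuine difficulty lies in the parallel statements for $\Id$-types and $\Pi$-types treated in the following subsections, where one must show that representability of the relevant functors transfers from $\D$ to $\overline \D$ via the retract argument outlined at the start of Section \ref{sec:ccc}.
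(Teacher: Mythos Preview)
Your proposal is correct and matches the paper's own proof essentially verbatim: the paper likewise notes (in the preamble to the proposition) that Proposition \ref{cauchydisplay} gives the display map category structure, and then the one-line proof cites Lemma \ref{basicliftingproperties} for closure of $\overline \D$ under composition.
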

\begin{proof}
$\overline \D$ is closed under composition by Lemma \ref{basicliftingproperties}, and this means that $(\C, \overline \D)$ models $\Sigma$-types.
\end{proof}
\subsection{$\Id$-types}\label{sec:idtypes}

\begin{prop}\label{funidcauchy}
Consider a Cauchy complete category $\C$.
Suppose that $(\C, \D)$ is a display map category which models $\Sigma$-types and functorial $\Id$-types.
Then $(\C, \overline \D)$ is a display map category which models functorial $\Id$-types.
\end{prop}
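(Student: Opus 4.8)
The plan is to extend the identity-type structure on $(\C, \D)$ to one on $(\C, \overline{\D})$ by exploiting that every $d \in \overline{\D}$ with codomain $Y$ is, by Proposition~\ref{retractclosure}, a retract in $\C/Y$ of a display map, together with the functorial weak factorization system from Proposition~\ref{idwfs} and the fact (Corollary~\ref{ccslice}) that each slice $\C/Y$ is Cauchy complete. Concretely, given $d \in \overline{\D}$ with codomain $Y$, I would first form the factorization $\Delta_d = \epsilon_d r_d$ in $\C/Y$ provided by the functorial factorization of Proposition~\ref{idwfs} applied to $\Delta_d : d \to d \times d$ (this makes sense since $d \times d \in \overline{\D}$ and $\overline{\D}$ is pullback-stable and composition-closed by Lemma~\ref{basicliftingproperties}, so $\C/Y$ has the relevant pullbacks by Proposition~\ref{cauchydisplay}). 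This immediately gives part~(1) of Definition~\ref{Idtypes}, and gives it functorially in $d$, since it is obtained by applying a functor $\C/Y \to (\C/Y)^{\two}$ to $\Delta_{(-)}$, itself functorial in $d$.

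Next I would verify part~(2): that $\epsilon_d \in \overline{\D}$. Here I use that $d$ is a retract of a display map $d' : X' \to Y$ in $\C/Y$, hence $d \times d$ is a retract of $d' \times d'$, hence $\Delta_d$ is a retract of $\Delta_{d'}$ as an arrow of $(\C/Y)^{\two}$; applying the functorial factorization (which, being a functor, preserves retracts) shows $\epsilon_d$ is a retract of $\epsilon_{d'}$. But $\epsilon_{d'}$ is the $\rho$-factor of $\Delta_{d'}$, which lies in $\D$ by Proposition~\ref{idwfs}; so $\epsilon_d$ is a retract of an element of $\D$, hence in $\overline{\D}$. The same retract argument, run in the relevant slice and using that $^\boxslash\D = {}^\boxslash\overline{\D}$ is closed under retracts (it is the left class of a weak factorization system), handles part~(3): $\alpha^* r_d$ is a retract of $\alpha^* r_{d'}$ — or rather of the corresponding pullback of $r_{d'}$ along the composite through $X'$ — which is in $^\boxslash\D$ by the hypothesis that $(\C,\D)$ models $\Id$-types, so $\alpha^* r_d \in {}^\boxslash\D = {}^\boxslash\overline{\D}$.

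I expect the main obstacle to be bookkeeping the retract diagrams coherently across the slices: the retract of $d$ by $d'$ in $\C/Y$ must be transported to a retract of the various pullbacks and products, and one must check that the factorization functor is applied to genuinely commuting retract data in $(\C/Y)^{\two}$ (equivalently $(\C/Y)^{\mathcal F}$), so that functoriality really does deliver $\epsilon_d$ as a retract of $\epsilon_{d'}$ and $r_d$ as a retract of $r_{d'}$ simultaneously and compatibly. A secondary subtlety is part~(3), where the pullback $\alpha : A \to X$ is along the domain, not the codomain: I would push the retract datum forward along $d \leftarrow d'$ appropriately, or alternatively invoke Lemma~\ref{retractrepresentable} directly to see that the relevant pullbacks exist and that $\alpha^* r_d$ is a retract of a map known to be in $^\boxslash\D$. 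Once these retract manipulations are set up, each of (1)–(3) follows formally, and functoriality is automatic because every construction used is a composite of functors.
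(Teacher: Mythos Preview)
Your approach differs from the paper's in a way that introduces a real gap at part~(3). You propose to factor $\Delta_d$, for $d \in \overline{\D}$, using the functorial weak factorization system of Proposition~\ref{idwfs}; that is, your $r_d$ is $\lambda(\Delta_d)$ and your $\epsilon_d$ is $\rho(\Delta_d)$. This does give a factorization satisfying (1) and (2), functorially. But when you then argue for (3) you write that the relevant pullback of $r_{d'}$ ``is in $^\boxslash\D$ by the hypothesis that $(\C,\D)$ models $\Id$-types.'' That hypothesis---clause~(3) of Definition~\ref{Idtypes}---is a statement about the \emph{given} $\Id$-type factorization of $\Delta_{d'}$, not about the factorization $\lambda(\Delta_{d'}),\rho(\Delta_{d'})$ produced by Proposition~\ref{idwfs}. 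These two factorizations of $\Delta_{d'}$ are in general different: the latter is built from $\Id(X'\times_Y X')$ (the identity type of $X'\times_Y X'\to *$), not from $\Id(d')$. So you have not established that the pullbacks $\alpha^*\lambda(\Delta_{d'})$ along $\pi_i\rho(\Delta_{d'})$ lie in $^\boxslash\D$, and your retract argument for (3) does not go through. (Nor does the retract step help: if you could prove (3) for the wfs factorization of $\Delta_{d'}$ with $d'\in\D$, the same argument would apply verbatim to $d\in\overline{\D}$ and the retract would be superfluous.)

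The paper avoids exactly this problem by working the other way round: it keeps the \emph{original} $\Id$-type factorization on $d\in\D$, for which clause~(3) holds by hypothesis, and then manufactures the factorization for $e\in\overline{\D}$ by applying the functoriality of $\Id$ on $\D/Y$ to the idempotent $is$ on $d$, obtaining an idempotent on $\iota_d$, and \emph{splitting} it in $\C/Y$ (Corollary~\ref{ccslice}, Corollary~\ref{splitsquare}). This is where Cauchy completeness is genuinely used, and it is what guarantees that $r_e,\epsilon_e$ arise as retracts of the \emph{hypothesised} $r_d,\epsilon_d$, so that clause~(3) for $e$ is inherited from clause~(3) for $d$. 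Your plan short-circuits the idempotent-splitting step and thereby loses access to the only factorization for which (3) is known.
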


\begin{proof}
Fix a slice $\C/ Y$ and an object $e \in \overline \D$ in this slice. We want to construct an $\Id$-type for $e$. There is a $d \in \D$ such that $e$ is a retract of $d$ (Proposition \ref{retractclosure}). Since we have an $\Id$-type on $d$, we have the following diagram in $\C/ Y$ (where $i$, $s$ form the retraction and $r_d, \epsilon_d$ form the $\Id$-type on $d$).
\[
\diagram
e \ar[d]^i & & e \times e \ar[d]^{i \times i} \\
d \ar[d]^-s \ar[r]^{r_d}& \iota_d \ar[r]^-{\epsilon_d} & d \times d \ar[d]^{s\times s}\\
e& & e \times e
\enddiagram \tag{$*$}
\]
Since the $\Id$ type is functorial, there is a morphism $\iota_{\langle is , is \times is \rangle}: \iota_d \to \iota_d$ making the following diagram commute. 
\[
\diagram
d \ar[d]^-s \ar[r]^{r_d}& \iota_d \ar[r]^-{\epsilon_d} \ar[dd]|{\iota_{\langle is , is \times is \rangle}}& d \times d \ar[d]^{s \times s}\\
e \ar[d]^i & & e \times e\ar[d]^{i \times i} \\
d \ar[r]^{r_d}& \iota_d \ar[r]^-{\epsilon_d} & d \times d
\enddiagram 
\]

Since $\langle is , is \times is \rangle$ is an idempotent and this factorization is given functorially, the morphism $\iota_{\langle is , is \times is \rangle}$ is also an idempotent. By Lemma \ref{ccslice}, $\C/Y$ is Cauchy complete, so we can split the idempotent $\iota_{\langle is , is \times is \rangle}$. Then by Corollary \ref{splitsquare}, this extends to splittings of the rectangles in diagram ($*$) above. This gives us the following commutative diagram.
\[
\diagram
e \ar[d]^i  \ar[r]^{r_e}& \iota_e\ar[r]^-{\epsilon_e} \ar[d]^{\iota_i}& e \times e \ar[d]^{i \times i} \\
d \ar[d]^-s \ar[r]^{r_d}& \iota_d \ar[r]^-{\epsilon_d} \ar[d]^{\iota_r}& d \times d \ar[d]^{s \times s}\\
e\ar[r]^{r_e}& \iota_e \ar[r]^-{\epsilon_e} & e \times e
\enddiagram 
\]

Now we see that the morphism $\epsilon_e$ is in $\overline \D$ since it is a retract of $\epsilon_d \in \D$.

Now, we need to show that for any $\alpha: a \to e$, the pullback $\alpha^* r_e$ is in $^{\boxslash}\overline \D$. Let $\epsilon_{xi}$ denote the composition $\pi_i \epsilon_{x}$ for $x = d,e$ and $i = 0,1$. Since $r_e$ is a retract of $r_d$, as shown in the following diagram, $\alpha^* r_e$ is a retract of $\alpha^* r_d$.
\[ \diagram
&&a \ar[r]^{\alpha^*r_e} \ar@{=}[dr]|\hole &\alpha^* \iota_e \ar[d]^{\alpha^*\epsilon_{ei}}&& & & e \ar[r]^{r_e} \ar@{=}[dr]|\hole & \iota_e \ar[d]^{\epsilon_{ei}}\\
&\alpha^*d \ar[ur]^{\alpha^*s} \ar[r]^{\alpha^*r_d} \ar@{=}[dr]|\hole & \alpha^*\iota_d \ar[ur]^(0.3){\alpha^* \iota_s} \ar[d]^(0.4){\alpha^*\epsilon_{di}} & a \ar@{=}[dddl]&& & d \ar[ur]^s \ar[r]^{r_d} \ar@{=}[dr]|\hole & \iota_d \ar[ur]^(0.3){\iota_s} \ar[d]^{\epsilon_{di}} & e \ar@{=}[dddl] \\
a \ar[ur]^{\alpha^*i} \ar[r]^{\alpha^*r_e} \ar@{=}[dr]& \alpha^*\iota_e \ar[ur]^(0.3){\alpha^*\iota_i} \ar[d]^(0.4){\alpha^*\epsilon_{ei}} & \alpha^*d \ar[dd]^(0.3){\alpha^*s}\ar[ur]_(0.3){\alpha^*s}&& & e \ar[ur]^i \ar[r]^{r_e} \ar@{=}[dr]& \iota_e \ar[ur]^(0.3){\iota_i} \ar[d]^{\epsilon_{ei}} & d \ar[dd]^s\ar[ur]_s \\
&a \ar[ur]_{\alpha^*i}\ar@{=}[dr]&&&& & e \ar@{=}[dr] \ar[ur]_i \\
&&a \ar[rrrrr]^\alpha &&&&& e
\enddiagram \]
Since $\alpha^* r_d$ is in $^{\boxslash} \D$ by hypothesis, and $^{\boxslash} \D$ is closed under retracts, we find that $\alpha^* r_e$ is in $^{\boxslash}\overline \D$.

Therefore, $(\C, \overline \D)$ models functorial $\Id$-types.
\end{proof}

\subsection{$\Pi$-types}

\begin{prop}\label{picauchy}
Consider a Cauchy complete category
of display maps $(\C, \D)$ which models $\Sigma$-types, $\Id$-types, and $\Pi$-types. Then the display map category $(\C, \overline \D)$ also models $\Pi$-types.

\end{prop}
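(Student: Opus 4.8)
The plan is to exhibit the presheaf that a $\Pi$-type must represent as a retract of one that is already known to be representable, and then to invoke Lemma \ref{retractrepresentable} together with the Cauchy completeness of slices (Corollary \ref{ccslice}). Fix composable $f\colon X\to Y$ and $g\colon W\to X$ in $\overline\D$; by Proposition \ref{cauchydisplay}, $(\C,\overline\D)$ is a display map category, so all pullbacks used below exist, and $\C/Y$ is Cauchy complete. We must produce $\Pi_f g\in\overline\D$ with $\C/Y(y,\Pi_f g)\cong \C/X(f^*y,g)$ naturally in $y$; write $P$ for the presheaf $y\mapsto \C/X(f^*y,g)$ on $\C/Y$.

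The difficulty, and the reason this is not a one-line reduction to the $\D$-case, is that replacing $f$ by a display map of which it is a retract (Proposition \ref{retractclosure}) changes its domain: $f$ is a retract in $\C/Y$ of $\bar f:=\rho(f)\colon X_1\to Y$ in $\D$, via $\lambda:=\lambda(f)\colon X\to X_1$ and $s\colon X_1\to X$ with $s\lambda=1_X$, $\bar f\lambda=f$ and $fs=\bar f$. So $f$ and $g$ no longer share a codomain to feed to the $\Pi$-types of $(\C,\D)$. I would fix this by transporting $g$ across the retraction: set $g':=s^*g\in\overline\D/X_1$ and $g'':=\rho(g')\in\D/X_1$, so that $g'$ is a retract of $g''$ in $\C/X_1$. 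Now $\bar f$ and $g''$ are composable display maps, so $\Pi_{\bar f}g''\in\D$ exists and, by the $\Pi$-types of $(\C,\D)$, represents $y\mapsto \C/X_1(\bar f^*y,g'')$.

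Next I would show that $P$ is a retract of $\C/X_1(\bar f^*(-),g'')$ in $[(\C/Y)^{\op},\Set]$, in two natural-in-$y$ stages. First, post-composition with the retraction $g'\to g''\to g'$ makes $\C/X_1(\bar f^*y,g')$ a retract of $\C/X_1(\bar f^*y,g'')$. Second, pasting pullbacks gives $f^*y\cong\lambda^*(\bar f^*y)$ from $\bar f\lambda=f$, and $g\cong\lambda^*(g')$ from $s\lambda=1_X$, so $P(y)\cong\C/X\bigl(\lambda^*(\bar f^*y),\lambda^*(g')\bigr)$; since the section $s^*\colon\C/X\to\C/X_1$ satisfies $\lambda^*s^*=(s\lambda)^*=1_{\C/X}$, the map $\lambda^*\colon\C/X_1(\bar f^*y,g')\to P(y)$ is a split epimorphism, a section being obtained by applying $s^*$ and then using the canonical isomorphisms $e^*(\bar f^*y)\cong\bar f^*y$ and $e^*(g')\cong g'$ (where $e:=\lambda s$), which hold because $\bar f e=\bar f$ and $se=s$. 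Composing the two stages presents $P$ as a retract of $\C/X_1(\bar f^*(-),g'')$. The one genuinely fussy point is checking that all these isomorphisms and retractions are natural in $y$; this is a routine diagram chase, but it is where the care is needed.

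To conclude: $P$ is a retract of $\C/X_1(\bar f^*(-),g'')\cong\C/Y(-,\Pi_{\bar f}g'')$, hence of a representable presheaf on the Cauchy complete category $\C/Y$, so $P$ is representable by Lemma \ref{retractrepresentable}, say $P\cong\C/Y(-,\pi)$ for some $\pi\colon\Pi\to Y$. By the Yoneda lemma, $\pi$ is a retract of $\Pi_{\bar f}g''$ in $\C/Y$; since $\Pi_{\bar f}g''\in\D$ and $\overline\D$ is retract-closed (Lemma \ref{basicliftingproperties}), $\pi\in\overline\D$. Setting $\Pi_f g:=\pi$ yields a display map with the required universal property, so $(\C,\overline\D)$ models $\Pi$-types.
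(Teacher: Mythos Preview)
Your proposal is correct and reaches the same conclusion by the same overall strategy---exhibit $y\mapsto\C/X(f^*y,g)$ as a retract of a representable presheaf and invoke Lemma~\ref{retractrepresentable}---but the construction of the retract is genuinely different from the paper's. The paper replaces $g$ by $M(\rho g):=\rho(g)\times_Y\Id(Y)$, the pullback of $\rho(g)$ along the \emph{projection} $Mf\to X$, and then builds the section and retraction explicitly out of the $\Id$-type structure: the natural transformations $i$ and $r$ are written down using concrete ``transport'' maps $a,b,c$ obtained as solutions to lifting problems, and the verification $ri=\mathrm{id}$ is a direct computation with these. You instead pull $g$ back along the \emph{retraction} $s\colon Mf\to X$ supplied by Proposition~\ref{retractclosure}, then apply $\rho$ to land in $\D$; your retract comes entirely from the functoriality of pullback together with $s\lambda=1_X$, so the $\Id$-types enter only through Proposition~\ref{retractclosure} and the factorization. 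Your route is more conceptual and avoids the explicit path algebra, at the cost of pushing the work into the coherence of iterated-pullback isomorphisms (which you correctly flag as the delicate point: one must check that the canonical isomorphisms $\bar f^*y\cong s^*(f^*y)$, $\lambda^*(\bar f^*y)\cong f^*y$, and $\lambda^*s^*\cong\mathrm{id}$ compose compatibly). The paper's route trades that abstract coherence check for a longer but entirely concrete calculation.
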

\begin{proof}
Consider morphisms $f: X \to Y $ and $g: W \to X $ which are both in $\overline \D$. 
We aim to obtain a $\Pi$-type $\Pi_f g$.

Note that because 
\[ \rho (g) \times_Y \Id(Y): (W \times_X \Id(X)) \times_Y \Id(Y) \to X \times_Y \Id(Y)\] is a pullback of $\rho (g)$, it is in $ \D$. 

\[ \diagram 
(W \times_X \Id(X)) \times_Y \Id(Y) \ar[d]_{\rho (g) \times_Y \Id(Y)} \ar[r] \pullback & W \times_X \Id(X)\ar[d]^{\rho(g)}\\
 X \times_Y \Id(Y) \ar[r] \ar[d] \pullback & X \ar[d]^f \\
 \Id (Y) \ar[r]^{\epsilon_0}& Y 
\enddiagram \]

For any morphism $x$, let $M(x)$ denote the middle object of the factorization given in Proposition \ref{idwfs}.
We will also denote the morphism $ \rho (g) \times_Y \Id(Y)$ as
\[ M(\rho g): M(f \circ \rho g) \to Mf\]
when it improves readability. (Note that the domain and codomain are indeed the middle objects of the factorizations of $f \circ \rho g$ and $f$, respectively.)

Since $M(\rho g)$ and $\rho f$ are in $\mathcal D$, we can form the $\Pi$-type $\Pi_{\rho f} M(\rho g )$ with the following bijection for any $y: A \to Y $ in $\C$.
\[ \C  / Y(y, \Pi_{\rho f} M(\rho g )) \cong \C / {Mf}(\rho f^* y, M(\rho g ))\]
This means that $\Pi_{\rho(f)} M( \rho g)$ represents the functor 
\[\C  /  {Mf}(\rho f^* -, M(\rho g)): \C/  Y \to \Set.\]
We now show that $\C  /  X(f^* -, g)$ is a retract of this functor, so by Lemma \ref{retractrepresentable}, it will itself be representable.

Let $i$ denote the natural transformation 
\[\C/ X(f^* -, g) \to \C/  {Mf}(\rho f^* -, M(\rho g))\]
which at a morphism $z: Z \to Y$ in $\C$, takes a morphism $m: f^* z \to  g$ in $\C/ X$ 
\[ \xymatrix@R=8pt{
X \times_Y Z \ar[dd]_-{m } \ar[drr]^{f^* z}
\\
& & X
\\
W  \ar[urr]^{g}
}\]
to the following morphism in $\C/  {Mf}$
\[ \diagram
X \times_{\epsilon_0} \Id(Y) {_{\epsilon_1}\times} Z \ar[ddrrrr]^{\rho f^* z}
 \ar[d]_-{ a \times 1_{\Id Y}\times  1_Z} \\
\Id(X)  {_{(f\epsilon_0 \times f\epsilon_1)}\times_{(\epsilon_0 \times \epsilon_1)}} \Id(Y)  {_{\epsilon_1}\times} Z 
\ar[d]_-{1_{ \Id X} \times 1_{\Id Y} \times m (\epsilon_{1X} \times 1_Z)}
\ar[drrrr]^{\epsilon_{X0} \times 1}
\\
\Id(X)  {_{(f\epsilon_0 \times f\epsilon_1)}\times_{(\epsilon_0 \times \epsilon_1)}}\Id(Y) {_{\epsilon_1}\times} W
\ar[d]_{b \times 1_{\Id Y }} \ar[rrrr]^{\epsilon_{X0} \times 1}
& & & & X \times_Y \Id(Y)
\\
W \times_{\epsilon_0} \Id(X) {_{f \epsilon_1} \times_{\epsilon_0}} \Id(Y) \ar[urrrr]^{M( \rho g)}
\enddiagram \]
where $a$ and $b$ are given by solutions to the following lifting problems.
\[ \diagram 
X \ar[r]^r  \ar[d]_{\lambda(f)} & \Id(X) \ar[d]^{\epsilon_0 \times f \epsilon_1 } & & W \ar[r]^-{\lambda(g)} \ar[d]_{rg \times 1} & W {_g \times_{\epsilon_0}} \Id(X) \ar[d]^{\rho(g)}
\\
X {_f \times_{\epsilon_0}} \Id(Y) \ar[r]^-{1 \times \epsilon_1} \ar@{-->}[ur]^a& X \times Y & & \Id(X) {_{\epsilon_1} \times_g} W \ar[r]^{\epsilon_0} \ar@{-->}[ur]^b &  X
\enddiagram \]
(The morphism $\epsilon_0 \times f \epsilon_1$ is in $\overline \D$ because it is the composition of $\epsilon_0 \times  \epsilon_1: \Id(X) \to X \times X$ with $1 \times f: X \times X \to X \times Y$. The morphism $rg \times 1$ is in $^\boxslash \D$ because it is one of the pullbacks of $r: X \to \Id(X)$ ensured to be in $^\boxslash \D$ by the definition of $\Id$-types.)

Then let $r$ denote the natural transformation 
\[ \C/ {Mf}(\rho f^* -, M(\rho g)) \to \C/ X(f^* -, g) \]
which at a morphism $z: Z \to Y$ in $\C$, takes a morphism $n:  \rho f^* z \to M(\rho g)$ in $ \C/  {Mf}$
\[ \diagram
X \times_{\epsilon_0} \Id(Y) {_{\epsilon_1}\times} Z\ar[dr]^{\rho f^* z} \ar[dd]_-{n}\\
& X \times_{\epsilon_0} \Id(Y) \\
W \times_{\epsilon_0} \Id(X) {_{\epsilon_1}\times_{\epsilon_0} } \Id(Y) \ar[ur]^{M( \rho g)}\\
\enddiagram \]
to the following composition in $\C/  {X}$
\[ \diagram
X \times_Y Z \ar[ddrrrr]^{f^* z}
\ar[d]_-{1_X \times r_Y \times  1_Z }\\
X \times_{\epsilon_0} \Id(Y) {_{\epsilon_1}\times} Z \ar[drrrr]^{\pi_X}
\ar[d]_n \\
W \times_{\epsilon_0} \Id(X) {_{\epsilon_1}\times_{\epsilon_0} } \Id(Y) \ar[d]_c \ar[rrrr]^{\epsilon_{X1}} & & & & X
\\
W  \ar[urrrr]^{g}
\enddiagram \]

where $c$ is a solution to the following lifting problem.
\[ \diagram 
W \ar@{=}[r] \ar[d]^{\lambda(g)} & \ar[d]^g W \\
W \times_{\epsilon_0} \Id(X) \ar[r]^-{\rho(g)} \ar@{-->}[ur]^c & X
\enddiagram \]

Now we claim that 
\[ \C/  X(f^* -, g) \xrightarrow{i} \C/  {Mf}(\rho(f)^* -, M(\rho g)) \xrightarrow{r} \C/  X(f^* -, g) \]
is a retract diagram.
To that end, consider a morphism $m$ of $\C/ X(f^* z, g)$. Then $ri(m)$ is the following composition. 
\[ \diagram
X \times_Y Z  \ar[dddrrrr]^{f^* z}
\ar[d]_-{1_X \times r_Y \times  1_Z }\\
X \times_{\epsilon_0} \Id(Y) {_{\epsilon_1}\times} Z \ar[ddrrrr]^{\pi_X}
 \ar[d]_-{ a \times 1_{\Id Y}\times  1_Z} \\
\Id(X)  {_{(f\epsilon_0 \times f\epsilon_1)}\times_{(\epsilon_0 \times \epsilon_1)}} \Id(Y)  {_{\epsilon_1}\times} Z 
\ar[d]_-{1_{ \Id X} \times 1_{\Id Y} \times m (\epsilon_{1X} \times 1_Z)}
\ar[drrrr]^{\epsilon_{X1}}
\\
\Id(X)  {_{(f\epsilon_0 \times f\epsilon_1)}\times_{(\epsilon_0 \times \epsilon_1)}}\Id(Y) {_{\epsilon_1}\times} W
\ar[d]_{b \times 1_{\Id Y }} \ar[rrrr]^{\epsilon_{X1}}
& & & & X 
\\
W \times_{\epsilon_0} \Id(X) {_{f \epsilon_1} \times_{\epsilon_0}} \Id(Y) \ar[urrrr]^{\epsilon_{X1}} \ar[d]_c \\
W \ar[uurrrr]^{g}
\enddiagram \]

The composition $a\circ (1_X \times r_Y) : X \to \Id(X) $ is $r_X$. Thus, the composite of the first three vertical morphisms in the above diagram is 
\[r_X \times  r_Y \times m: X \times_Y Z \to \Id(X)  {_{(f\epsilon_0 \times f\epsilon_1)}\times_{(\epsilon_0 \times \epsilon_1)}}\Id(Y) {_{\epsilon_1}\times} W .\]
Moreover, 
$b \circ (r_X \times 1_W):  W \to W { \times_{\epsilon_0}} \Id(X)$ 
is $1_W\times r_X$ so the composite of the first four morphisms above is
\[m \times r_X \times r_Y: X \times_Y Z \to W \times_{\epsilon_0} \Id(X) {_{f \epsilon_1} \times_{\epsilon_0}} \Id(Y) .\]
 The composite $c \circ (1_W \times r_X): W \to W$ is the identity, so the vertical composite above is $m$.
Therefore, $ri(m) = m$, and $i$ and $r$ form a retract.

Now by Lemma \ref{retractrepresentable}, we can conclude that $\C /  X(f^* -, g) : \C  / Y \to \Set$ is representable by an object which we will denote by $\Pi_f g$. Furthermore, $\Pi_f g$ is a retract of $\Pi_{\rho f} M(\rho g)$. Since $\Pi_{\rho f} M(\rho g)$ is in $\D$, we can conclude that $\Pi_f g$ is in $\overline \D$, the retract closure of $\D$. Therefore, $(\C, \overline \D)$ does in fact model $\Pi$-types.
\end{proof}

\subsection{Summary}
Putting together Propositions \ref{cauchydisplay}, \ref{sigmacauchy}, \ref{funidcauchy}, and \ref{picauchy}, we get the following theorem. 
\begin{thm}\label{maincauchythm}
Consider a Cauchy complete category $\C$ and a display map category $(\C,  \D)$ which models $\Sigma$-types and functorial $\Id$-types. Then $(\C, \overline \D)$ is again a display map category modeling $\Sigma$- and functorial $\Id$-types. If $(\C,  \D)$ also models $\Pi$-types, then $(\C, \overline \D)$ also models $\Pi$-types.
\end{thm}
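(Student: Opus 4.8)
The statement is the conjunction of four transport claims for the passage from $(\C,\D)$ to $(\C,\overline\D)$: the display-map-category axioms, $\Sigma$-types, functorial $\Id$-types, and $\Pi$-types. The unifying principle, already flagged in Section~\ref{sec:ccc}, is that by Proposition~\ref{retractclosure} every $e\in\overline\D$ is a retract, in a slice, of some display map $d\in\D$, and that by Lemma~\ref{retractrepresentable} a retract of a representable presheaf on a Cauchy complete category is representable. So for each piece of structure I would (i) express the datum whose existence is demanded --- a pullback, an $\Id$-factorization, a $\Pi$-object --- as the representing object of a presheaf; (ii) exhibit that presheaf as a retract of the corresponding presheaf built from $d$, which is representable by hypothesis; and (iii) observe that the new representing object is itself a retract of a display map, hence lies in $\overline\D$.

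For the display-map axioms (Proposition~\ref{cauchydisplay}), stability under pullback and containment of isomorphisms and of maps to the terminal object are immediate, since $\overline\D=(^\boxslash\D)^\boxslash$ is a right lifting class containing $\D$ (Lemma~\ref{basicliftingproperties}); the only work is existence of pullbacks of a map $d\in\overline\D$, obtained by applying the retract principle to the cone presheaf $\Nat(c(-),D)$ of the pullback diagram $D$, which is a retract of $\Nat(c(-),D')$ for the diagram $D'$ built from a display map $d'$. The $\Sigma$-type clause (Proposition~\ref{sigmacauchy}) is free: $\overline\D$ is closed under composition. For functorial $\Id$-types (Proposition~\ref{funidcauchy}) I would use functoriality of the chosen factorization on $\D/Y$: the idempotent $\langle is,is\times is\rangle$ on $d$ induces, by functoriality, an idempotent $\iota_{\langle is,is\times is\rangle}$ on $\Id(d)$; split it in the Cauchy complete slice $\C/Y$ (Corollary~\ref{ccslice}) and extend it, via Corollary~\ref{splitsquare}, to a splitting of the whole rectangle, producing $\Id(e)$ with $\epsilon_e$ a retract of $\epsilon_d$, hence in $\overline\D$; the lifting condition holds because $\alpha^*r_e$ is a retract of $\alpha^*r_d\in{^\boxslash\D}={^\boxslash\overline\D}$ and lifting classes are retract-closed.

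The substantial case is $\Pi$-types (Proposition~\ref{picauchy}). Given $f\colon X\to Y$ and $g\colon W\to X$ in $\overline\D$, rather than splitting an idempotent on a putative $\Pi$-object directly, I would first factor $f$ and $g$ through the weak factorization system of Proposition~\ref{idwfs}, so that $\rho f$ and a suitable pullback $M(\rho g)\colon M(f\circ\rho g)\to Mf$ of $\rho g$ are genuine display maps, and form $\Pi_{\rho f}M(\rho g)\in\D$, which represents $\C/Mf(\rho f^*-,M(\rho g))$. Then I would display $\C/X(f^*-,g)$ as a retract of this functor by exhibiting natural transformations $i$ and $r$ with $ri=\mathrm{id}$: the section $i$ sends $m\colon f^*z\to g$ to a composite obtained by inserting the units $r_X,r_Y$ and the lifts $a,b$ coming from the left-lifting properties of $\lambda(f)$ (against $\epsilon_0\times f\epsilon_1$) and of $rg\times 1$ (against $\rho(g)$), while $r$ post-composes with $1_X\times r_Y\times 1_Z$, projects, and applies the lift $c$ of $\lambda(g)$ against $g$. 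The main obstacle --- the one place requiring real computation --- is verifying $ri=\mathrm{id}$, which reduces to the three identities $a\circ(1_X\times r_Y)=r_X$, $b\circ(r_X\times 1_W)=1_W\times r_X$, and $c\circ(1_W\times r_X)=1_W$, each forced by the triangle commutativities of the respective lifting squares. Given the retract, Lemma~\ref{retractrepresentable} yields a representing object $\Pi_f g$, which is a retract of $\Pi_{\rho f}M(\rho g)\in\D$ and hence lies in $\overline\D$. Combining the four propositions then gives Theorem~\ref{maincauchythm}.
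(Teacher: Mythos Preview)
Your proposal is correct and follows essentially the same approach as the paper: the theorem is assembled from Propositions~\ref{cauchydisplay}, \ref{sigmacauchy}, \ref{funidcauchy}, and \ref{picauchy}, and your outlines of those four proofs---including, for $\Pi$-types, the construction of $i$ and $r$ via the lifts $a,b,c$ and the reduction of $ri=\mathrm{id}$ to the three identities $a\circ(1_X\times r_Y)=r_X$, $b\circ(r_X\times 1_W)=1_W\times r_X$, $c\circ(1_W\times r_X)=1_W$---match the paper's arguments in both structure and detail.
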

\begin{proof}
By Proposition \ref{cauchydisplay}, $(\C, \overline \D)$ is a category with display maps. By Proposition \ref{sigmacauchy}, it models $\Sigma$-types. By Proposition \ref{funidcauchy}, it models functorial $\Id$-types. By Proposition \ref{picauchy}, it models $\Pi$-types.
\end{proof}

\begin{cor}
Consider a weak factorization system $(\mathcal L, \mathcal R)$ on a Cauchy complete category $\C$. The following are equivalent:
\begin{enumerate}
\item There is a subclass $\D \subseteq \mathcal R$ such that $\overline \D = \mathcal R$ and $(\C, \D)$ is a display map category which models $\Sigma$- and functorial $\Id$-types.
\item $(\C, \mathcal R)$ is a display map category which models $\Sigma$- and functorial $\Id$-types.
\end{enumerate}
The following are also equivalent: 
\begin{enumerate}
\item There is a subclass $\D \subseteq \mathcal R$ such that $\overline \D = \mathcal R$ and $(\C, \D)$ is a display map category which models $\Sigma$-, functorial $\Id$-, and $\Pi$-types.
\item $(\C, \mathcal R)$ is a display map category which models $\Sigma$-, functorial $\Id$-, and $\Pi$-types.
\end{enumerate}
\end{cor}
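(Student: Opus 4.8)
The plan is to derive both equivalences formally from Theorem~\ref{maincauchythm}, the only extra ingredient being the observation that the right class of a weak factorization system is retract-closed, so that $\overline{\mathcal R} = \mathcal R$. Concretely, since $(\mathcal L, \mathcal R)$ is a weak factorization system we have $\mathcal L = {}^\boxslash \mathcal R$ and $\mathcal R = \mathcal L^\boxslash$, whence $\overline{\mathcal R} = ({}^\boxslash \mathcal R)^\boxslash = \mathcal L^\boxslash = \mathcal R$. I would record this first, as it is used in one direction of each equivalence.

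For the direction $(2)\Rightarrow(1)$ (in either pair) I would simply take $\D := \mathcal R$: then $\D \subseteq \mathcal R$ trivially, and $\overline \D = \overline{\mathcal R} = \mathcal R$ by the observation above, while $(\C, \D) = (\C, \mathcal R)$ is a display map category modeling $\Sigma$- and functorial $\Id$-types (and $\Pi$-types in the second pair) by hypothesis~(2). Note that no use is made of Cauchy completeness in this direction.

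For $(1)\Rightarrow(2)$ I would invoke Theorem~\ref{maincauchythm}: given a subclass $\D \subseteq \mathcal R$ with $\overline \D = \mathcal R$ such that $(\C, \D)$ is a display map category modeling $\Sigma$- and functorial $\Id$-types, the hypotheses of that theorem are exactly met (with $\C$ Cauchy complete), so $(\C, \overline \D)$ is a display map category modeling $\Sigma$- and functorial $\Id$-types; since $\overline \D = \mathcal R$, this is precisely~(2). For the second pair, $(\C, \D)$ additionally models $\Pi$-types, so the last clause of Theorem~\ref{maincauchythm} gives that $(\C, \overline \D) = (\C, \mathcal R)$ also models $\Pi$-types. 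There is no genuine obstacle here: all the substantive content has already been packaged into Theorem~\ref{maincauchythm}, and the only points requiring a moment's care are matching the (functorial) $\Id$-type hypotheses across the two statements and checking that the retract closure applied to $\mathcal R$ returns $\mathcal R$.
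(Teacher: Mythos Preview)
Your argument is correct and is exactly the derivation the paper intends: the corollary is stated without proof immediately after Theorem~\ref{maincauchythm}, and your $(1)\Rightarrow(2)$ via that theorem together with $(2)\Rightarrow(1)$ via $\D := \mathcal R$ and $\overline{\mathcal R} = \mathcal R$ is the natural reading.
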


\section{Display map categories reflected in weak factorization systems}

In this last section, we remark that our main theorem (\ref{maincauchythm}) can be phrased more categorically as Theorem \ref{thm:subcat} below. Here, we consider various categories of display map categories on a fixed category $\C$. One might want to consider categories of display map categories with more structure, but we give here a simplified account of the situation to just expose a more categorical version of our result without being encumbered by technicalities.

Let $\C$ be a Cauchy complete category. Let $\mathcal S(\C)$ denote the category whose objects are subclasses $\M$ of morphisms of $\C$ and whose morphisms $\mathcal M \to \mathcal N$ are inclusions $\M \subseteq \N$. Then we can identify the following four full subcategories of $\mathcal S(\C)$:
\begin{itemize}
\item $\mathsf{DMC}_{\Sigma,\Id}(\C)$, the full subcategory of $\mathcal S(\C)$ spanned by those $\M$ such that $(\C,\M)$ is a display map category with $\Sigma$- and functorial $\Id$-types;
\item $\mathsf{DMC}_{\Sigma,\Id,\Pi}(\C)$, the full subcategory of $\mathcal S(\C)$ spanned by those $\M$ such that $(\C,\M)$ is a display map category with $\Sigma$-, functorial $\Id$-, and $\Pi$-types;
\item $\mathsf{WFS}_{\Sigma,\Id}(\C)$, the full subcategory of $\mathsf{DMC}_{\Sigma,\Id}(\C)$ spanned by those $\M$ such that $(^\boxslash \M,\M)$ is a weak factorization system; and
\item $\mathsf{WFS}_{\Sigma,\Id,\Pi}(\C)$, the full subcategory of $\mathsf{DMC}_{\Sigma,\Id,\Pi}(\C)$ spanned by those $\M$ such that $(^\boxslash \M,\M)$ is a weak factorization system.
\end{itemize}

Now we can state our main theorem as the existence of a reflector.

\begin{thm}\label{thm:subcat} Consider a Cauchy complete category $\C$.
The category $\mathsf{WFS}_{\Sigma,\Id}(\C)$ is a reflective subcategory of $\mathsf{DMC}_{\Sigma,\Id}(\C)$, and $\mathsf{WFS}_{\Sigma,\Id,\Pi}(\C)$ is a reflective subcategory of $\mathsf{DMC}_{\Sigma,\Id,\Pi}(\C)$. That is, there are left adjoints $L$ in the diagram below.
\[ \diagram
\mathsf{WFS}_{\Sigma,\Id}(\C) \ar@{^{ (}->}@<-5pt>[r]^\bot & \mathsf{DMC}_{\Sigma,\Id}(\C) \ar@<-6pt>[l]_L \\
\mathsf{WFS}_{\Sigma,\Id,\Pi}(\C) \ar@{^{ (}->}[u] \ar@{^{ (}->}@<-5pt>[r]^\bot & \mathsf{DMC}_{\Sigma,\Id,\Pi}(\C) \ar@<-6pt>[l]_L \ar@{^{ (}->}[u]
\enddiagram \]
\end{thm}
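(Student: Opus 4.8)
The plan is to exploit the fact that all four categories in play are \emph{posets} — each is a full subcategory of $\mathcal S(\C)$, the poset of subclasses of morphisms of $\C$ ordered by inclusion — so that exhibiting a reflector amounts to nothing more than showing: for each object $\M$ of $\mathsf{DMC}_{\Sigma,\Id}(\C)$ there is a \emph{least} class $\N$ belonging to $\mathsf{WFS}_{\Sigma,\Id}(\C)$ with $\M \subseteq \N$. I claim this least class is the retract closure $\overline\M$, so that one sets $L\M := \overline\M$, with unit the inclusion $\M \subseteq \overline\M$; functoriality of $L$ is just the monotonicity of the retract-closure operation, and the universal property of the unit is exactly the minimality statement.

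First I would check that $\overline\M$ does lie in $\mathsf{WFS}_{\Sigma,\Id}(\C)$. By Theorem \ref{maincauchythm}, $(\C,\overline\M)$ is a display map category modeling $\Sigma$- and functorial $\Id$-types, so $\overline\M$ is at least an object of $\mathsf{DMC}_{\Sigma,\Id}(\C)$. By Proposition \ref{idwfs}, $(^\boxslash\M, \overline\M)$ is a weak factorization system. It remains only to observe $^\boxslash\overline\M = {}^\boxslash\M$, so that $(^\boxslash\overline\M,\overline\M)$ is the weak factorization system required by the definition of $\mathsf{WFS}_{\Sigma,\Id}(\C)$; this is immediate from the weak factorization system axioms, since $\overline\M = (^\boxslash\M)^\boxslash$ gives $^\boxslash\overline\M = {}^\boxslash\bigl((^\boxslash\M)^\boxslash\bigr) = {}^\boxslash\M$.

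Next I would verify minimality. Suppose $\N \in \mathsf{WFS}_{\Sigma,\Id}(\C)$ with $\M \subseteq \N$. Since $\N$ is the right class of a weak factorization system, $\N = (^\boxslash\N)^\boxslash = \overline\N$, and hence by Lemma \ref{basicliftingproperties} it is closed under retracts in $\C^\two$; containing $\M$, it therefore contains the retract closure of $\M$, which is $\overline\M$ by Proposition \ref{retractclosure}. Thus $\overline\M \subseteq \N$, as needed, so $\overline\M$ is indeed the least object of $\mathsf{WFS}_{\Sigma,\Id}(\C)$ above $\M$ and $L \dashv {\hookrightarrow}$. The $\Pi$-case is handled identically, invoking Proposition \ref{picauchy} in place of the $\Id$-portion of Theorem \ref{maincauchythm} to see that $\overline\M$ again models $\Pi$-types whenever $\M$ does. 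Finally, the square of functors commutes strictly, since $L$ is the retract-closure operation $\M \mapsto \overline\M$ in both rows while the vertical inclusions are identities on underlying classes of morphisms.

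The mathematical content is carried entirely by Theorem \ref{maincauchythm} together with Propositions \ref{idwfs} and \ref{retractclosure}; the only genuinely new step is the repackaging of ``smallest weak factorization system refining a given display structure'' as a poset reflection. The one point that deserves a line of care — rather than being an obstacle — is the identity $^\boxslash\overline\M = {}^\boxslash\M$, which guarantees both that the left class is not enlarged by passing to $\overline\M$ and that the reflector actually lands in $\mathsf{WFS}_{\Sigma,\Id}(\C)$ and not merely in $\mathsf{DMC}_{\Sigma,\Id}(\C)$.
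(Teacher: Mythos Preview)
Your proposal is correct and follows essentially the same approach as the paper: define $L(\M):=\overline\M$, invoke Theorem~\ref{maincauchythm} (and Proposition~\ref{idwfs}) to see that $L$ lands in $\mathsf{WFS}_{\Sigma,\Id}(\C)$, and then verify the universal property via the equivalence $\overline\M\subseteq\N \iff \M\subseteq\N$ for retract-closed $\N$. Your presentation is in fact slightly more careful than the paper's in making explicit the identity $^\boxslash\overline\M={}^\boxslash\M$ needed to confirm that $\overline\M$ really lies in $\mathsf{WFS}_{\Sigma,\Id}(\C)$, but the argument is the same.
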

\begin{proof}
Consider the endofunctor on $\mathcal S(\C)$ given on objects by $L(\M):= \overline \M$.
By Theorem \ref{maincauchythm}, this functor can be restricted to functors 
\[L : \mathsf{DMC}_{\Sigma,\Id}(\C) \to \mathsf{WFS}_{\Sigma,\Id}(\C), \] \[L : \mathsf{DMC}_{\Sigma,\Id,\Pi}(\C) \to \mathsf{WFS}_{\Sigma,\Id,\Pi}(\C). \] 
To see that these are the left adjoints shown in the statement, we need to show for any $\D $ in $\mathsf{DMC}_{\Sigma,\Id}(\C)$ and any $\mathcal R$ in $\mathsf{WFS}_{\Sigma,\Id}(\C)$ that 
\[ \hom (L {\D }, \mathcal R) \cong \hom ({  \D }, \mathcal R), \]
or, equivalently, that there is an inclusion $\overline  \D  \subseteq \mathcal R$ if and only if there is an inclusion $ \D  \subseteq \mathcal R$. If $\overline  \D  \subseteq \mathcal R$, then since $\D  \subseteq \overline \D$, we have that $ \D  \subseteq \mathcal R$. If $\D  \subseteq \mathcal R$, then $\overline \D  \subseteq \overline  {\mathcal R}$. Since $\overline {\mathcal R} = \mathcal R$ by Lemma \ref{basicliftingproperties}, we have that $\overline \D  \subseteq  {\mathcal R}$.
\end{proof}

\section{Outlook}

In conclusion, we mention ways in which these results can be extended.

In Section \ref{sec:idtypes}, we showed that if $\C$ is a Cauchy complete category and $(\C, \D)$ is a display map category which models functorial $\Id$-types, then $(\C, \overline \D)$ also models functorial $\Id$-types. We needed the hypothesis that the $\Id$-types were \emph{functorial} in order to use the hypothesis that $\C$ was Cauchy complete. However, this was not strictly necessary. In the next paper in this series, we will develop results which imply the following: if $\C$ is a Cauchy complete category and $(\C, \D)$ is a display map category which models $\Id$-types, then $(\C, \overline \D)$ also models $\Id$-types. This already appears as Proposition 2.5.9 in \cite{Nor17}.

In this paper, we have described a relationship between display map categories and weak factorization systems. We hope to upgrade this to a description of the relationship between comprehension categories and more structured weak factorization systems. In particular, the perspective taken in Theorem \ref{thm:subcat} will be the one appropriate for strengthening our results in that direction. This will build upon work done by Moss in \cite{Mos18} in which he makes clear the relationship between Cauchy completion and comprehension categories.

\subsection*{Acknowledgements} I thank my PhD supervisor Martin Hyland for his guidance and many useful discussions regarding this work. I would also like to thank Benedikt Ahrens and Peter LeFanu Lumsdaine for reading and commenting on drafts of this paper. Thanks also go to the editor, Richard Garner, and the anonymous referees whose insightful comments were very helpful in forming the present paper.
 
\bibliographystyle{alpha}
\bibliography{article}

\end{document}